\theoremstyle{plain}% Theorem-like structures provided by amsthm.sty
\newtheorem{theorem}{Theorem}[section]
\newtheorem{lemma}[theorem]{Lemma}
\theoremstyle{definition}
\theoremstyle{remark}
\newtheorem{remark}{Remark}
\newcommand{\R}{ \mathbb{R} }
\newcommand{\C}{ \mathbb{C} }
\newcommand{\Y}{ \widehat{Y} }
\newcommand{\gam}{{\pmb{\gamma}}}
\newcommand{\bg}{{\pmb{g}}}
\newcommand{\Span}{ {\rm span} }
\newcommand{\Rea}{ {\rm Re} }
\newcommand{\Ima}{ {\rm Im} }
\newcommand{\esssup}{ {\rm ess \: sup} }
\begin{document}

\articletype{}% Specify the article type or omit as appropriate

\title{A theoretical investigation of time-dependent Kohn--Sham
  equations: new proofs}

\author{
\name{G. Ciaramella\textsuperscript{a}\thanks{ 
CONTACT A.~N. Author. Email: gabriele.ciaramella@uni-konstanz.de} 
and M. Sprengel\textsuperscript{b} and A. Borzi\textsuperscript{c}}
\affil{\textsuperscript{a}Fachbereich Mathematik, Universit\"at Konstanz,
\textsuperscript{b}---
\textsuperscript{c}Institut f\"ur Mathematik, Universit\"at W\"urzburg,
}
}

\maketitle

\begin{abstract}
In this paper, a new analysis for existence, uniqueness, and regularity of solutions to
a time-dependent Kohn-Sham equation is presented. 
The Kohn-Sham equation is a nonlinear integral Schr\"odinger equation
that is of great importance in many applications in physics and computational chemistry.
To deal with the time-dependent, nonlinear and non-local potentials of the Kohn-Sham equation,
the analysis presented in this manuscript makes use of
energy estimates, fixed-point arguments, regularization techniques, and
direct estimates of the non-local potential terms.
The assumptions considered for the time-dependent and nonlinear potentials 
make the obtained theoretical results suitable to be used also in an optimal control framework.
\end{abstract}

% REQUIRED
\begin{keywords}
  Kohn-Sham model; time-dependent density functional theory;
  nonlinear \linebreak Schr\"odinger equation
\end{keywords}

\section{Introduction}
One of the main issues in computational chemistry and physics is the curse of dimensionality
of the multi-particle Schr\"odinger equation. To tackle this problem the so-called
density-functional theory has been introduced by P. Hohenberg and W. Kohn in 1964,
and W. Kohn and L. J. Sham in 1965; see \cite{HohenbergKohn:PhysRev:1964,KohnSham1965}.
This theory has been extended to time-dependent problems by E. Runge and E. K. U. Gross
in 1984 \cite{RungeGross1984}; see also \cite{vanLeeuwen99,LectureNotesTDDFT,Rugg2015}.
These theories allow one to describe the state of a multi-particle physical system,
represented by the solution of the multi-particle Schr\"odinger equation, by a density
function corresponding to a system of nonlinear integral one-particle Schr\"odinger equations.
This is the time-dependent Kohn-Sham (TDKS) system of equations that allows one to describe the state a system of
$N$ particles. In particular, the Kohn-Sham system is a set of $N$ one-particle Schr\"odinger equations,
which is numerically tractable, in contrast to the full multi-particle Schr\"odinger equation;
see, e.g., \cite{vanLeeuwen99,LectureNotesTDDFT,LIBRO_QUANTUM}. For this reason, the TDKS model became
central in many applications in computational chemistry and physics dealing also with optimal control problems,
see, e.g., \cite{vanLeeuwen99,LectureNotesTDDFT,octopuscite,CastroWerschnikGross2012,Sprengel2,Sprengel3,LIBRO_QUANTUM}
and references therein.

Nonlinear integral Schr\"odinger equations motivated also great interests in the mathematical
community; see, e.g., \cite{CancesLeBris1999} and the classical reference \cite{cazenave2003semilinear}.
In these works, nonlinear integral Schr\"odinger equations with time-independent potentials are treated
using classical semi-group theory. We remark that the results of \cite{CancesLeBris1999,cazenave2003semilinear} cannot be
applied to our problem as we consider explicitly time-dependent potentials that are not covered in these references.

The classical approach based on semi-group theory, has been extended to the case of time-dependent
potential. We refer to \cite[Chapters 6 and 7]{1983approximation} and, e.g., to the very recent
work \cite{MASPERO2017721}. However, to use these results the potentials must be continuous functions
in time; see, e.g., Hypothesis (3) of Theorem 6.2.5 in \cite{1983approximation} and Hypothesis H0 in \cite{MASPERO2017721}. 
This assumption is in general not suitable for concrete applications
and in optimal control theory, where the time-dependency of the potentials is due to time-dependent
control functions that are, in general, much less regular than continuously differentiable functions.

To the best of our knowledge, a time-dependent model similar to the one considered in this work,
is only addressed in \cite{Jerome2015,Jerome2019,Jerome2013,Sprengel1}.
In \cite{Jerome2015,Jerome2019,Jerome2013} the authors prove
existence and uniqueness of solutions assuming that the potentials are continuously
differentiable in time. As mentioned above, this assumption appears too strong in the
context of control applications.
An attempt to improve these results is made in \cite{Sprengel1}, where the authors try to obtain
existence and regularity results by exploiting a Galerkin approach combined with energy estimates.
Unfortunately, the proofs of some energy estimates derived in \cite{Sprengel1} are erroneous.
The goal of this paper is to remedy to this issue. 
In this work, we prove results that are very similar to
the ones claimed in \cite{Sprengel1}, by slightly strengthening the assumptions
on the boundary regularity of the space domain and on the regularity of the nonlinear potentials.
Our results are obtained using proof techniques that are different from the ones used in \cite{Sprengel1}.

To be more specific, the goal of this work is to prove existence, uniqueness and regularity of solutions to the TDKS equation
\begin{equation}\label{eq:TDKS_pre}
\begin{split}
i \partial_t \Psi(x,t) &= - \Delta \Psi(x,t) + V(x,t)\Psi(x,t) + F(\Psi(x,t)) \text{ in $\Omega \times (0,T)$},\\
\Psi(x,0) &= \Psi_0(x) \text{ in $\Omega$}, \\
\Psi(x,t) &= 0 \text{ on $\partial \Omega \times (0,T)$},
\end{split}
\end{equation}
where $\Omega \subset \R^3$ is a bounded domain and $\Psi(x,t) \in \C$. More details about this model
are discussed in Section \ref{sec:formulation}, where we also state our main assumptions.
Notice that \eqref{eq:TDKS_pre} is a single Schr\"odinger equation. The choice of having
this single equation, rather than a system of Schr\"odinger equations is only made to conveniently
ease the notation. However, the extension of our results to the case of a system of Schr\"odinger
equations is straightforward.

Our theoretical analysis proceeds as follows. In Section \ref{sec:auxiliary}, we study an auxiliary problem,
namely a linear inhomogeneous Schr\"odinger equation with time-dependent potentials.
We prove existence and uniqueness of solutions to this equation by a Galerkin approach and energy estimates.
These existence results and the corresponding energy estimates are then used in Section \ref{sec:full_NL}
for proving existence and uniqueness of a solution to \eqref{eq:TDKS_pre} through a fixed-point argument.
The solution obtained in Section \ref{sec:full_NL} is `regular', in the sense that it lies in the space
$$L^\infty(0,T;H^1_0(\Omega;\C) \cap H^2(\Omega;\C)) \: \cap \: C([0,T];H^1_0(\Omega;\C)),$$
while its time weak derivative is in $L^{\infty}(0,T;L^2(\Omega;\C))$.
This result is obtained by requiring in \eqref{eq:TDKS_pre} that the boundary $\partial \Omega$ is of class $C^{2,1}$,
that the potentials $V$ and $F$ are twice differentiable in space, and that the initial condition function $\Psi_0$
is in $H^1_0(\Omega;\C)\cap H^2(\Omega;\C)$.
These hypotheses are relaxed in Section \ref{sec:less_regular}, where
we assume that the potentials are differentiable and the initial condition is in $H^1_0(\Omega)$
and prove existence and uniqueness of
solution in $L^2(0,T;H^1_0(\Omega;\C))$ with time weak derivative in $L^2(0,T;H^{-1}(\Omega;\C))$.
This is achieved by combining the results of Sections \ref{sec:auxiliary}
with a regularization technique based on mollifiers.

Finally, we wish to remark that the results presented in this work can be extended to the case of the 
Kohn-Sham adjoint equation, which is used in the framework of optimal control problems governed by
the TDKS equation. This adjoint equation can be regarded, in some sense, as a linearized
TDKS equation with inhomogeneous right-hand side; see, e.g., \cite{Sprengel3,Sprengel2,Sprengel1}.

\section{Formulation of the TDKS problem and main assumptions}\label{sec:formulation}
The goal of this work is to prove existence and uniqueness of solutions
to the TDKS equation
\begin{equation}\label{eq:TDKS}
\begin{split}
i \partial_t \Psi(x,t) &= - \Delta \Psi(x,t) + V(x,t)\Psi(x,t) + F(\Psi(x,t)) \text{ in $\Omega \times (0,T)$},\\
\Psi(x,0) &= \Psi_0(x) \text{ in $\Omega$}, \\
\Psi(x,t) &= 0 \text{ on $\partial \Omega \times (0,T)$},
\end{split}
\end{equation}
where $\Omega \subset \R^3$ is a bounded domain and
$\Psi(x,t) \in \C$. The external potential $V$ is
\begin{equation}
V(x,t) = V_0(x) + V_u(x) u(t) \in \R,
\end{equation}
where $V_0$, $V_u$ and $u$ are real functions. The nonlinearities of the model
are expressed through $F$, that is
\begin{equation}\label{eq:F_NL}
F(\Psi) = V_H(\Psi) \Psi + V_{xc}(\Psi) \Psi,
\end{equation}
where $V_H$ is the Hartree potential
\begin{equation}
V_H(\Psi)(x,t) = \int_{\R^3} \frac{|\mathcal{E}\Psi(y,t)|^2}{|x-y|}dy,
\end{equation}
and $V_{xc}(\Psi(x,t)) \in \R$ takes into account exchange and correlation potentials, whose dependence on
time and space variables is implicit through $\Psi$.
For further details about the Kohn-Sham model, we refer to \cite{Sprengel1} and references therein.
In the Hartree potential, we introduced the operator $\mathcal{E}$ that extends $\Psi$
from $\Omega$ to $\R^3$.
Since $\partial \Omega$ is assumed to be of class $C^{2,1}$, this operator can be defined
using the so-called Calder\'on extension theory, as done in Section IV of \cite{adams1970sobolev}
(see in particular Theorem 4.32).

Let $(\cdot , \cdot)$ denotes the usual inner product for $L^2(\Omega;\C)$ and $\| \cdot \|$ the corresponding induced norm.
The weak form of the TDKS equation is
\begin{equation}\label{eq:TDKS_weak}
i (\partial_t \Psi,\Phi) = (\nabla \Psi, \nabla \Phi) + (V \Psi,\Phi) + (F(\Psi),\Phi) \text{ a.e. in $(0,T)$},
\end{equation}
for all $\Phi \in H^1_0(\Omega;\C)$. 
We look for a weak solution $\Psi$ to \eqref{eq:TDKS_weak} that satisfies the initial
condition $\Psi(\cdot,0) = \Psi_0$.

Consider the following Banach spaces and the corresponding norms
\begin{align*}
X &:= L^2(0,T;H^1_0(\Omega;\C)), & \| \Phi \|_X^2 &:= \int_0^T \| \Phi(t) \|_{H^1}^2 dt, \\
X^* &:= L^2(0,T;H^{-1}(\Omega;\C)), & \| \Phi \|_{X^*}^2 &:= \int_0^T \| \Phi(t) \|_{H^{-1}}^2 dt,\\
W(0,T) &:= \{ \Phi \in X \, : \, \Phi' \in X^* \}, & \| \Phi \|_{W}^2 &:= \| \Phi \|_X^2 + \| \Phi' \|_{X^*}^2,\\
Y &:=L^2(0,T;L^2(\Omega;\C)), & \| \Phi \|_Y^2 &:= \int_0^T \| \Phi(t) \|^2 dt,\\
Z &:= H^1_0(\Omega;\C) \cap H^2(\Omega;\C), & \| \Phi \|_{H^2}^2 &:= \sum_{0\leq|\pmb{\alpha}|\leq 2} \|\partial^{\pmb{\alpha}}\Phi\|^2,\\
Y_{\infty,0} &:=L^\infty(0,T;L^2(\Omega;\C)), & \| \Phi \|_{Y_{\infty,0}} &:= \esssup_{t \in (0,T)} \| \Phi(t) \|, \\
Y_{\infty,1} &:=L^\infty(0,T;H^1(\Omega;\C)), & \| \Phi \|_{Y_{\infty,1}} &:= \esssup_{t \in (0,T)} \| \Phi(t) \|_{H^1}, \\
\Y &:=L^\infty(0,T;Z), & \| \Phi \|_{\Y} &:= \esssup_{t \in (0,T)} \| \Phi(t) \|_{H^2}.
\end{align*}
Let us introduce our standing assumptions:
\begin{itemize}
\item[{\rm (A1)}]\label{ass:A1} The domain $\Omega \subset \R^3$ is bounded and $\partial \Omega \in C^{2,1}$.
\item[{\rm (A2)}]\label{ass:A2} $V_0,V_u \in W^{2,\infty}(\Omega;\R)$, where $W^{2,\infty}(\Omega;\R)$ is a standard Sobolev space;
see, e.g., \cite{Ciarlet2013}.
\item[{\rm (A3)}]\label{ass:A3} $u \in L^\infty(0,T;\R)$.
\item[{\rm (A4)}]\label{ass:A4} $\Psi_0 \in Z$.
\item[{\rm (A5)}]\label{ass:A5} For every $\Phi \in Z$ it holds that $V_{xc}(\Phi)\Phi \in Z$ and there exist positive constants $K$ and $\widetilde{K}$ such that
\begin{equation}\label{eq:Lipschitz2}
\| V_{xc}(\Phi)\Phi - V_{xc}(\Lambda)\Lambda \| \leq K \| \Phi - \Lambda \|
\end{equation}
\begin{equation}\label{eq:Lipschitz3}
\| V_{xc}(\Phi)\Phi - V_{xc}(\Lambda)\Lambda \|_{H^2} \leq \widetilde{K} \| \Phi - \Lambda \|_{H^2}
\end{equation}
for any $\Phi,\Lambda \in Z$.
%constants $K_1,K_2,K_3>0$ such that
%\begin{equation*}
%\begin{split}
%\| V_{xc}(\Phi)\Phi - V_{xc}(\Lambda)\Lambda \| &\leq K_1 \| \Phi - \Lambda \|, \\
%\| \nabla (V_{xc}(\Phi)\Phi) - \nabla (V_{xc}(\Lambda)\Lambda) \| &\leq K_2 \| \nabla \Phi - \nabla \Lambda \|, \\
%\| \Delta (V_{xc}(\Phi)\Phi) - \Delta (V_{xc}(\Lambda)\Lambda) \| &\leq K_3 \| \Delta\Phi - \Delta\Lambda \|, \\
%\end{split}
%\end{equation*}
%for any $\Phi,\Lambda \in Z$.
\end{itemize}
Notice that the assumptions {\rm (A2)}, {\rm (A4)} and {\rm (A5)} will be relaxed in Section \ref{sec:less_regular}.

Let us recall some facts and existing results that we will use in this work.
\begin{itemize}
\item[{\rm (B1)}]\label{ass:B1} There exists an orthogonal basis for $H^1_0(\Omega;\C)$ which is orthonormal in \linebreak $L^2(\Omega;\C)$.
Since $\partial \Omega \in C^{2,1}$ we can choose this basis to be $\{ \Psi_j \}_j$, 
where $\Psi_j \in Z \cap H^3(\Omega;\C)$ are eigenfunctions of the Laplace
operator. This follows from \cite[Theorem 1 in 6.5.1 and Theorem 4 in 6.3.2]{Evans2010}
and \cite[Theorem 2.5.1.1]{grisvard2011elliptic}.
Throughout this paper $\{ \Psi_j \}_j$ is used to denote this basis.

\item[{\rm (B2)}]\label{ass:B2} For any integer $m>0$ and some coefficients $\gamma_1,\dots,\gamma_m \in \C$ the functions
$\widehat{\Psi}:= \sum_{j=1}^m \gamma_j \Psi_j$ and $\Delta \widehat{\Psi}$ vanish on $\partial \Omega$.

\item[{\rm (B3)}]\label{ass:B3} For any $\Psi_0 \in Z$, we can define $(\gamma_0)_j := (\Psi_0,\Psi_j)$, for $j=1,\dots,M$ and $\widehat{\Psi}_0 :=  \sum_{j=1}^m (\gamma_0)_j \Psi_j$.
Then the inequalities $\| \widehat{\Psi}_0 \|^2 \leq \| \Psi_0 \|^2$, $\| \Delta \widehat{\Psi}_0 \|^2 \leq \| \Delta \Psi_0 \|^2$
and $\| \nabla \widehat{\Psi}_0 \|^2 \leq \| \nabla \Psi_0 \|^2$ 
follow by Parseval-Plancherel's theorem and the orthogonality properties of $\Psi_j$.
\item[{\rm (B4)}]\label{ass:B4} Consider the extension operator $\mathcal{E}$.
Since $\partial \Omega \in C^{2,1}$, Theorem 4.32 in Section IV of \cite{adams1970sobolev}
guarantees that $\mathcal{E}$ is a continuous operator from  $W^{1,p}(\Omega;\C)$ to $W^{1,p}(\R^3;\C)$
and from $W^{2,p}(\Omega;\C)$ to $W^{2,p}(\R^3;\C)$ for $1< p < \infty$. 
\item[{\rm (B5)}]\label{ass:B5} Consider the Hartree potential $V_H$ and define $f(\Phi):=V_H(\Phi)\Phi$. 
It follows from \cite[Lemma 5]{CancesLeBris1999} that
there exist positive constants $C_a,C_b,C_c$ such that
\begin{equation}\label{eq:Lipschitz1}
\| f(\Phi) - f(\Lambda) \| \leq C_a (\| \Phi \|_{H^1}^2 + \| \Lambda \|_{H^1}^2) \| \Phi - \Lambda \| \quad \forall \Phi,\Lambda \in H^1(\Omega;\C),
\end{equation}
\begin{equation}
\| f(\Phi) \|_{H^2} \leq C_b \| \Phi \|_{H^1}^2 \| \Phi \|_{H^2} \quad \forall \Phi \in H^2(\Omega;\C),
\end{equation}
\begin{equation}\label{eq:11}
\| f(\Phi) - f(\Lambda) \|_{H^2} \leq C_c (\| \Phi \|_{H^2}^2 + \| \Lambda \|_{H^2}^2) \| \Phi - \Lambda \|_{H^2} \quad \forall \Phi,\Lambda \in H^2(\Omega;\C).
\end{equation}
\item[{\rm (B6)}]\label{ass:B6} Consider the space $Z$. The norms $\| \cdot \|_{H^2}$ and $\| \cdot \|_Z := \| \Delta \cdot \|$ are equivalent; see, e.g., \cite[Theorem 2.31]{libro}.
We denote by $C_Z$ the positive equivalence constant such that $\| \Phi \|_{H^2} \leq C_Z \| \Phi \|_Z$, $\forall \Phi \in Z$.
\end{itemize}

\section{An auxiliary problem}\label{sec:auxiliary}
Consider the auxiliary problem
\begin{equation}\label{eq:TDKS_aux}
\begin{split}
i \partial_t \Psi(x,t) &= - \Delta \Psi(x,t) + V(x,t)\Psi(x,t) + G(x,t) \text{ in $\Omega \times (0,T)$},\\
\Psi(x,0) &= \Psi_0(x) \text{ in $\Omega$}, \\
\Psi(x,t) &= 0 \text{ on $\partial \Omega \times (0,T)$},
\end{split}
\end{equation}
where $G \in \Y := L^\infty(0,T;Z)$ is a given function.
Problem \eqref{eq:TDKS_aux} in weak form is
\begin{equation}\label{eq:TDKS_weak_aux}
i (\partial_t \Psi,\Phi) = (\nabla \Psi, \nabla \Phi) + (V \Psi,\Phi) + (G,\Phi) \text{ a.e. in $(0,T)$ and $\forall \Phi \in H^1_0(\Omega;\C)$}.
\end{equation}
The goal of this section is to prove that this weak problem is uniquely solvable and to obtain energy estimates for the corresponding solution.
We proceed by using a Galerkin approach. To this purpose, let us consider an integer $m>0$, a finite-dimensional space $W_m:= \Span\{\Psi_1,\dots,\Psi_m\}$, where $\{\Psi_j\}_j$ is the basis introduced
in (B1), and the Galerkin approximation problem
\begin{equation}\label{eq:TDKS_weak_aux_m}
i (\partial_t \Psi^a_m,\Phi) = (\nabla \Psi^a_m, \nabla \Phi) + (V \Psi^a_m,\Phi) + (G,\Phi) \text{ a.e. in $(0,T)$ and $\forall \Phi \in W_m$}.
\end{equation}
If we make the ansatz
$\Psi^a_m(t) = \sum_{j=1}^m \gamma_j(t) \Psi_j$, where the coefficients $\gamma_j(t)$ are time-dependent functions,
then \eqref{eq:TDKS_weak_aux_m} is equivalent to the following initial-value problem
\begin{equation}\label{eq:ODE}
\begin{split}
i \dot{\gam}(t) &= A(t) \gam(t) + \bg(t) \text{ in $(0,T)$},\\
\gam(0) &= \gam_0, \\
\end{split}
\end{equation}
where $\gam(t) = [ \gamma_1(t) , \dots, \gamma_m(t) ]^\top$,
$\gam_0 \in \C^m$ is defined by $(\gam_0)_j := ( \Psi_0 , \Psi_j)$ for $j=1,\dots,m$,
and $A(t) \in \R^{m\times m}$ and $\bg(t) \in \R^m$ are obtained by projecting the right-hand side operators and functions of \eqref{eq:TDKS_aux} onto $W_m$.
The finite-dimensional problem \eqref{eq:ODE} is uniquely solvable by an absolutely continuous function $\gam$. This follows by Carath\'eodory's existence theorem
(since $A \in L^1(0,T;\R^{m\times m})$ and $\bg \in L^1(0,T;\R^m)$); see, e.g., \cite{Walter}.
Therefore, the Galerkin approximation $\Psi^a_m$ has the following regularity
\begin{equation*}
\Psi^a_m \in C([0,T];Z \cap H^3(\Omega)), \quad \partial_t \Psi^a_m \in L^1(0,T;Z \cap H^3(\Omega)).
\end{equation*}

We now prove the following energy estimates for the Galerkin solution $\Psi^a_m$.
\begin{theorem}(Energy estimates for the auxiliary problem)\label{thm:energy_aux}
Let $G \in \Y$. Then for almost all $t \in (0,T)$ there exist positive constants $C_{\nabla}$, $C_{1,\Delta}$, $C_{2,\Delta}$, $C_{3,\Delta}$, $K_{-1}$ and $K_{-2}$ (independent of $m$) such that
\begin{equation}\label{eq:EN_1}
\| \Psi^a_m(t) \|^2 \leq \exp(T) \Bigl[ \| \Psi_0 \|^2 + T \| G \|_{Y_{\infty,0}}^2 \Bigr],
\end{equation}
\begin{equation}\label{eq:EN_2}
\| \nabla \Psi^a_m(t) \|^2 \leq \exp(C_\nabla T) \Bigl[ \| \nabla \Psi_0 \|^2 + T \| G \|_{Y_{\infty,1}}^2 \Bigr],
\end{equation}
\begin{equation}\label{eq:EN_3}
\| \Delta \Psi^a_m(t) \|^2 \leq \exp(C_{1,\Delta} T) \Bigl[ \| \Delta \Psi_0 \|^2 
+ T \bigl[ C_{2,\Delta} \| \nabla \Psi_0 \|^2 + C_{3,\Delta} \| G \|_{\Y}^2 \bigr] \Bigr],
\end{equation}
\begin{equation}\label{eq:EN_4_pre}
\| \partial_t \Psi^a_m(t) \|_{H^{-1}} \leq K_{-1},%(\| G \|_{Y_{\infty,1}},\|\Psi_0\|_{H^1}),
\end{equation}
\begin{equation}\label{eq:EN_4}
\| \partial_t \Psi^a_m(t) \| \leq K_{-2},%(\| G \|_{\Y},\|\Psi_0\|_{H^2}),
\end{equation}
where $K_{-1}$ depends on  $\| G \|_{Y_{\infty,1}}$ and $\|\Psi_0\|_{H^1}$,
and $K_{-2}$ depends on $\| G \|_{\Y}$ and $\|\Psi_0\|_{H^2}$.
\end{theorem}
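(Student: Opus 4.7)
The plan is to derive the five estimates in order, each time testing the Galerkin identity \eqref{eq:TDKS_weak_aux_m} against a carefully chosen element of $W_m$ and taking imaginary parts to eliminate the symmetric (real) contributions coming from $-\Delta$ and $V$. Property (B2) is what makes every needed integration by parts close without boundary terms, and (B3) is used throughout to bound $\Psi^a_m(0)$ by $\Psi_0$. For \eqref{eq:EN_1} I would take $\Phi = \Psi^a_m$. Since $(\nabla \Psi^a_m,\nabla\Psi^a_m) = \|\nabla \Psi^a_m\|^2$ and $(V\Psi^a_m,\Psi^a_m) = \int V|\Psi^a_m|^2$ are real (recall $V$ is real-valued), taking imaginary parts leaves
\begin{equation*}
\tfrac{1}{2}\tfrac{d}{dt}\|\Psi^a_m\|^2 \;=\; \mathrm{Im}(G,\Psi^a_m),
\end{equation*}
and Cauchy--Schwarz, Young's inequality and Gronwall's lemma produce \eqref{eq:EN_1}.

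For \eqref{eq:EN_2} I test with $\Phi = -\Delta \Psi^a_m \in W_m$ (the Laplacian is diagonal in the basis $\{\Psi_j\}$). Using $\Psi^a_m = 0$ on $\partial\Omega$, integration by parts rewrites the left-hand side as $i(\nabla\partial_t\Psi^a_m,\nabla\Psi^a_m)$ and gives $(\nabla\Psi^a_m,\nabla(-\Delta\Psi^a_m)) = \|\Delta\Psi^a_m\|^2$ (real), while the potential term becomes $\int V|\nabla\Psi^a_m|^2 + \int \Psi^a_m\,\nabla V\cdot\nabla\overline{\Psi^a_m}$ whose imaginary part is bounded by $\|\nabla V\|_\infty\|\Psi^a_m\|\|\nabla\Psi^a_m\|$ (using $V\in W^{2,\infty}$), and the source term becomes $(\nabla G,\nabla\Psi^a_m)$ (using $G(t)\in H^1_0$). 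Imaginary parts then give a differential inequality for $\|\nabla \Psi^a_m\|^2$ which, combined with \eqref{eq:EN_1} and Gronwall, yields \eqref{eq:EN_2}. For \eqref{eq:EN_3} I test with $\Phi = \Delta^2\Psi^a_m\in W_m$. Thanks to (B2) every boundary term in the iterated integration by parts vanishes, producing
\begin{equation*}
i(\Delta\partial_t\Psi^a_m,\Delta\Psi^a_m) = \|\nabla\Delta\Psi^a_m\|^2 + (\Delta(V\Psi^a_m),\Delta\Psi^a_m) + (\Delta G,\Delta\Psi^a_m).
\end{equation*}
Expanding $\Delta(V\Psi^a_m) = V\Delta\Psi^a_m + 2\nabla V\cdot\nabla\Psi^a_m + \Psi^a_m\,\Delta V$, the term $\int V|\Delta\Psi^a_m|^2$ is real and drops, while the remaining contributions are controlled by $C(\|\nabla\Psi^a_m\|+\|\Psi^a_m\|+\|\Delta G\|)\|\Delta\Psi^a_m\|$ using $V\in W^{2,\infty}$; inserting \eqref{eq:EN_1}--\eqref{eq:EN_2} and applying Gronwall gives \eqref{eq:EN_3}.

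Estimates \eqref{eq:EN_4_pre} and \eqref{eq:EN_4} follow by reading the Galerkin equation as an identity for $\partial_t\Psi^a_m$. For \eqref{eq:EN_4_pre}, given $\Phi\in H^1_0(\Omega;\C)$ I decompose $\Phi = \Phi_m + \Phi_m^\perp$ with $\Phi_m$ the $L^2$-projection onto $W_m$, which by (B1) is also an $H^1$-orthogonal projection so $\|\Phi_m\|_{H^1}\le\|\Phi\|_{H^1}$; since $\partial_t\Psi^a_m\in W_m$, $(\partial_t\Psi^a_m,\Phi) = (\partial_t\Psi^a_m,\Phi_m)$ and \eqref{eq:TDKS_weak_aux_m} immediately yields $\|\partial_t\Psi^a_m\|_{H^{-1}}\le \|\nabla\Psi^a_m\|+\|V\|_\infty\|\Psi^a_m\|+\|G\|$, which is bounded via \eqref{eq:EN_2}. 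For \eqref{eq:EN_4} I exploit the stronger regularity: for $\Phi\in W_m$, integration by parts gives $(\nabla\Psi^a_m,\nabla\Phi) = (-\Delta\Psi^a_m,\Phi)$, so \eqref{eq:TDKS_weak_aux_m} reads $i(\partial_t\Psi^a_m,\Phi) = (-\Delta\Psi^a_m+V\Psi^a_m+G,\Phi)$; taking $\Phi=\partial_t\Psi^a_m\in W_m$ and using Cauchy--Schwarz produces $\|\partial_t\Psi^a_m\|\le\|\Delta\Psi^a_m\|+\|V\|_\infty\|\Psi^a_m\|+\|G\|$, which is bounded using \eqref{eq:EN_1} and \eqml{eq:EN_3}. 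The main obstacle is estimate \eqref{eq:EN_3}: all the integrations by parts must close without boundary contributions, which is precisely what (B2) provides for this specific Galerkin basis, and the commutator terms from $\Delta(V\Psi^a_m)$ are what force the regularity assumption $V_0,V_u\in W^{2,\infty}$ built into (A2).
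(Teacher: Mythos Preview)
Your proposal is correct and follows essentially the same route as the paper: for \eqref{eq:EN_1}--\eqref{eq:EN_3} the paper also computes $\tfrac{d}{dt}$ of $\|\Psi^a_m\|^2$, $\|\nabla\Psi^a_m\|^2$, $\|\Delta\Psi^a_m\|^2$ and substitutes the strong form $\partial_t\Psi^a_m = i\Delta\Psi^a_m - iV\Psi^a_m - iG$, which is exactly equivalent to your choice of test functions $\Phi=\Psi^a_m$, $-\Delta\Psi^a_m$, $\Delta^2\Psi^a_m\in W_m$, while \eqref{eq:EN_4_pre} and \eqref{eq:EN_4} are handled identically by testing with $\Phi\in H^1_0$ (implicitly projected) and $\Phi=\partial_t\Psi^a_m$, respectively. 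Your explicit use of the $H^1$-orthogonality of the projection in \eqref{eq:EN_4_pre} and the observation that $\int V|\Delta\Psi^a_m|^2$ is real and drops are minor refinements the paper does not spell out, but otherwise the arguments coincide.
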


\begin{proof}
By testing \eqref{eq:TDKS_weak_aux_m} with $\Phi=\Psi^a_m$, we get
\begin{equation*}
i (\partial_t \Psi^a_m,\Psi^a_m) = (\nabla \Psi^a_m, \nabla \Psi^a_m) + (V \Psi^a_m,\Psi^a_m) + (G,\Psi^a_m),
\end{equation*}
whose imaginary part is
\begin{equation*}
\frac{1}{2} \frac{d}{dt} \| \Psi^a_m(t) \|^2 = \Ima( G(t) , \Psi^a_m(t)) 
\leq \| G(t) \| \| \Psi^a_m(t) \| 
\leq \frac{1}{2}\| G \|_{Y_{\infty,0}}^2 + \frac{1}{2}\| \Psi^a_m(t) \|^2.
\end{equation*}
The estimate \eqref{eq:EN_1} follows by Gr\"onwall's inequality in differential form and (B3).

To prove \eqref{eq:EN_2}, we compute
\begin{equation}\label{eq:proof_EN_2}
\frac{d}{dt} \| \nabla \Psi^a_m(t) \|^2 = 2 \Rea ( \nabla \partial_t \Psi^a_m(t) , \nabla \Psi^a_m(t) ) = 2 \Rea ( \partial_t \Psi^a_m(t) , - \Delta \Psi^a_m(t) ),
\end{equation}
where we integrated by parts and used the fact that $\partial_t \Psi^a_m(t)=0$ on $\partial \Omega$ for almost all $t$ in $(0,T)$.
Equation \eqref{eq:TDKS_weak_aux_m} in strong form is
\begin{equation}\label{eq:TDKS_str_aux_m}
\partial_t \Psi^a_m = i\Delta \Psi^a_m -i V \Psi^a_m -i G.
\end{equation}
Substituting \eqref{eq:TDKS_str_aux_m} into \eqref{eq:proof_EN_2}, we get
\begin{equation*}
\begin{split}
\frac{d}{dt} \| \nabla \Psi^a_m(t) \|^2 
&= 2 \Rea ( i\Delta \Psi^a_m(t) -i V(t) \Psi^a_m(t) -i G(t) , - \Delta \Psi^a_m(t) ) \\
&= 2 \Rea \Bigl[-i \| \Delta \Psi^a_m(t) \|^2 + i ( V(t) \Psi^a_m(t) , \Delta \Psi^a_m(t) ) + i ( G(t) , \Delta \Psi^a_m(t) )\Bigr] \\
&= 2 \Rea \Bigl[ i ( V(t) \Psi^a_m(t) , \Delta \Psi^a_m(t) ) + i ( G(t) , \Delta \Psi^a_m(t) )\Bigr] \\
&= 2 \Rea \Bigl[ -i ( \nabla(V(t) \Psi^a_m(t)) , \nabla \Psi^a_m(t) ) - i ( \nabla G(t) , \nabla \Psi^a_m(t) )\Bigr],
\end{split}
\end{equation*}
which implies that
\begin{equation*}
\begin{split}
\frac{d}{dt} \| \nabla \Psi^a_m(t) \|^2 
&\leq \| \nabla (V(t) \Psi^a_m(t)) \|^2 + \| \nabla G(t) \|^2 + 2 \| \nabla \Psi^a_m(t) \|^2 \\
&\leq 2\| \nabla V \|_{\infty}^2 \| \Psi^a_m(t) \|^2 + \| \nabla G(t) \|^2 + \bigl[ 2 + 2 \| V \|_{\infty}^2 \bigr]  \| \nabla \Psi^a_m(t) \|^2 \\
&\leq \| G \|_{Y_{\infty,1}}^2 + 2 \bigl[ 1 + \| V \|_{\infty}^2 + C_{PF}^2 \| \nabla V \|_{\infty}^2 \bigr]  \| \nabla \Psi^a_m(t) \|^2,
\end{split}
\end{equation*}
where $\| \cdot \|_{\infty}$ denotes the usual norm in $L^{\infty}(0,T;L^{\infty}(\Omega))$,
%we used the Poincar\'e-Friedrichs inequality, 
and $C_{PF}$ is the Poincar\'e-Friedrichs constant.
The estimate \eqref{eq:EN_2} follows by using Gr\"onwall's inequality in differential form, (B3) and setting
$C_{\nabla} =  2[1 + \| V \|_{\infty}^2 + C_{PF}^2 \| \nabla V \|_{\infty}^2]$.

To prove \eqref{eq:EN_3}, let us consider
\begin{equation}\label{eq:proof_EN_3}
\frac{d}{dt} \| \Delta \Psi^a_m(t) \|^2 = 2 \Rea ( \Delta \partial_t \Psi^a_m(t) , \Delta \Psi^a_m(t) ).
\end{equation}
Substituting \eqref{eq:TDKS_str_aux_m} into \eqref{eq:proof_EN_3}, we obtain
\begin{equation}\label{eq:DELTA}
\begin{split}
\frac{d}{dt} \| \Delta \Psi^a_m(t) \|^2 
&= 2 \Rea ( \Delta(\partial_t \Psi^a_m(t)) , \Delta \Psi^a_m(t) ) \\
&= 2 \Rea ( - \nabla (\partial_t \Psi^a_m(t)) , \nabla (\Delta \Psi^a_m(t) ) ) \\
&= 2 \Rea ( -\nabla (i\Delta \Psi^a_m(t) -i V \Psi^a_m(t) -i G(t)) , \nabla (\Delta \Psi^a_m(t) ) ) \\
%&= 2 \Rea \Bigl[ - i \| \nabla(\Delta \Psi^a_m(t)) \|^2  -i ( \Delta(V(t) \Psi^a_m(t)) , \Delta \Psi^a_m(t) ) -i ( \Delta 
%G(t) , \Delta \Psi^a_m(t) ) \Bigr]\\
&= 2 \Rea \Bigl[ - i \| \nabla(\Delta \Psi^a_m(t)) \|^2  -i ( \Delta(V(t) \Psi^a_m(t)) + \Delta G(t) , \Delta \Psi^a_m(t) ) \Bigr]\\
&= 2 \Rea \Bigl[ -i ( \Delta(V(t) \Psi^a_m(t)) , \Delta \Psi^a_m(t) ) -i ( \Delta G(t) , \Delta \Psi^a_m(t) ) \Bigr],\\
\end{split}
\end{equation}
where we used the fact that $\Psi^a_m(t) \in H^3(\Omega;\C)$ (see (B1)),
$\Psi^a_m(t)=0$, $\Delta \Psi^a_m(t)=0$ and $G(t)=0$ ($G(t) \in Z$)
on $\partial \Omega$ for almost all $t$ in $(0,T)$.
Hence, we get
\begin{equation*}
\begin{split}
\frac{d}{dt} \| \Delta \Psi^a_m(t) \|^2 
&= 2 \Rea \Bigl[ -i ( \Delta V(t) \Psi^a_m(t) + V(t) \Delta \Psi^a_m(t) + 2 \nabla V(t) \nabla \Psi^a_m(t), \Delta \Psi^a_m(t) ) \\
&\qquad \quad \; -i ( \Delta G(t) , \Delta \Psi^a_m(t) ) \Bigr]\\
&\leq 2 \| V \|_{\infty} \| \Delta \Psi^a_m(t) \|^2 + 2 \| \Delta \Psi^a_m(t) \| \| \Delta V(t) \Psi^a_m(t) + 2 \nabla V(t) \nabla \Psi^a_m(t) \| \\
&\quad +\| \Delta G(t) \|^2 + \| \Delta \Psi^a_m(t) \|^2 \\
&\leq \bigl[2+2\|V\|_{\infty} \bigr] \| \Delta \Psi^a_m(t) \|^2 + \| G \|_{\Y}^2 + 8 \| \nabla V \|_{\infty}^2 \| \nabla \Psi^a_m(t) \|^2 \\
&\quad + 2 \| \Delta V \|_{\infty}^2 \| \Psi^a_m(t) \|^2. \\
\end{split}
\end{equation*}
Using the Poincar\'e-Friedrichs inequality (to estimate $\| \Psi^a_m(t) \|^2$ by $\| \nabla \Psi^a_m(t) \|^2$) and \eqref{eq:EN_2}, we obtain
\begin{equation*}
\begin{split}
\frac{d}{dt} \| \Delta \Psi^a_m(t) \|^2 &\leq \bigl[2+2\|V\|_{\infty} \bigr] \| \Delta \Psi^a_m(t) \|^2 \\
&+ \bigl[ 8 \| \nabla V \|_{\infty}^2 + 2 C_{PF}^2 \| \Delta V \|_{\infty}^2 \bigr] \exp(C_\nabla T) \| \nabla \Psi_0 \|^2 \\
&+ \Bigl[ 1 + T \bigl[ 8 \| \nabla V \|_{\infty}^2 + 2 C_{PF}^2 \| \Delta V \|_{\infty}^2 \bigr] \exp(C_\nabla T) \Bigl] \| G \|_{\Y}^2 \\
&= C_{1,\Delta} \| \Delta \Psi^a_m(t) \|^2 + C_{2,\Delta} \| \nabla \Psi_0 \|^2 + C_{3,\Delta} \| G \|_{\Y}^2 , \\
\end{split}
\end{equation*}
where 
$$C_{1,\Delta} = 2+2\|V\|_{\infty},$$ 
$$C_{2,\Delta} = \bigl[ 8 \| \nabla V \|_{\infty}^2 + 2 C_{PF}^2 \| \Delta V \|_{\infty}^2 \bigr] \exp(C_\nabla T),$$
$$C_{3,\Delta} = \Bigl[ 1 + T \bigl[ 8 \| \nabla V \|_{\infty}^2 + 2 C_{PF}^2 \| \Delta V \|_{\infty}^2 \bigr] \exp(C_\nabla T) \Bigl].$$
The result \eqref{eq:EN_3} follows by using Gr\"onwall's inequality in differential form and (B3).

Next, we prove \eqref{eq:EN_4_pre}. Consider any $\Phi \in H^1_0(\Omega)$ such that $\| \Phi \|_{H^1}\leq 1$
and the equation \eqref{eq:TDKS_weak_aux_m}.
We then write
\begin{equation*}
\begin{split}
|\langle \partial_t \Psi^a_m(t),\Phi \rangle| &\leq |(\nabla \Psi^a_m(t), \nabla \Phi)| + |(V(t) \Psi^a_m(t),\Phi)| + |(G(t),\Phi)| \\
&\leq \|\nabla \Psi^a_m(t) \| + \|V\|_\infty \| \Psi^a_m(t) \| + \| G(t) \|,
\end{split}
\end{equation*}
for almost all $t \in (0,T)$, where $\langle \cdot, \cdot \rangle$ denotes the duality brackets.
Using \eqref{eq:EN_1} and \eqref{eq:EN_2},
we obtain that there exists a constant $K_{-1}$ independent of $m$ (but depending on $G$ and $\Psi_0$) such that
\begin{equation*}
\| \partial_t \Psi^a_m(t) \|_{H^{-1}} 
= \sup_{\Phi \in H^1_0(\Omega;\C) \, : \, \| \Phi \|_{H^1}\leq 1} |\langle \partial_t \Psi^a_m(t),\Phi \rangle |
\leq K_{-1}. %(\| G \|_{Y_{\infty,1}},\|\Psi_0\|_{H^1}).
\end{equation*}

Finally, let us prove \eqref{eq:EN_4}. Testing \eqref{eq:TDKS_weak_aux_m} with $\Phi=\partial_t \Psi^a_m(t)$, we obtain
\begin{equation*}
i \| \partial_t \Psi^a_m(t) \|^2 = (-\Delta \Psi^a_m(t) , \partial_t \Psi^a_m(t) ) + (V(t) \Psi^a_m(t) , \partial_t \Psi^a_m(t) ) + (G, \partial_t \Psi^a_m(t) ).
\end{equation*}
This implies
\begin{equation*}
\| \partial_t \Psi^a_m(t) \|^2 \leq \bigl( \| \Delta \Psi^a_m(t) \| + \| V \|_{\infty} \| \Psi^a_m(t) \| + \| G(t) \| \bigr) \| \partial_t \Psi^a_m(t) \|
\end{equation*}
and hence
\begin{equation*}
\| \partial_t \Psi^a_m(t) \| \leq  \| \Delta \Psi^a_m(t) \| + \| V \|_{\infty} \| \Psi^a_m(t) \| + \| G \|_{\Y}.
\end{equation*}
The result \eqref{eq:EN_4} follows using the other energy estimates.
\end{proof}

We are now ready to prove existence and uniqueness of a weak solution to the auxiliary problem \eqref{eq:TDKS_weak_aux}.

\begin{theorem}(Existence and uniqueness of a solution to the auxiliary problem)\label{thm:ex_un_sol}
For any $G \in \Y$ there exists a unique weak solution $\Psi \in W(0,T)$ to \eqref{eq:TDKS_weak_aux}
(with $\Psi(0) = \Psi_0$)
that satisfies the energy estimates of Theorem \ref{thm:energy_aux} and such that
\begin{equation*}
\Psi \in \Y, 
\quad 
\partial_t \Psi \in L^{\infty}(0,T;L^2(\Omega;\C))
\quad \text{and} \quad 
\Psi \in C([0,T];H^1_0(\Omega;\C)).
\end{equation*}
\end{theorem}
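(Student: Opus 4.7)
The proof follows the standard Faedo--Galerkin strategy. The plan is: (i) use the Galerkin approximations $\{\Psi^a_m\}$ constructed via the ansatz and ODE \eqref{eq:ODE}; (ii) invoke the $m$-uniform bounds of Theorem \ref{thm:energy_aux}; (iii) extract weak-$*$ limits; (iv) identify a limit as the unique weak solution to \eqref{eq:TDKS_weak_aux}. By \eqref{eq:EN_1}--\eqref{eq:EN_3} the sequence $\{\Psi^a_m\}$ is uniformly bounded in $\Y=L^\infty(0,T;Z)$, and by \eqref{eq:EN_4} the sequence $\{\partial_t\Psi^a_m\}$ is uniformly bounded in $L^\infty(0,T;L^2(\Omega;\C))$. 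Since $Z$ and $L^2(\Omega;\C)$ are separable Hilbert spaces, $L^\infty(0,T;Z)$ and $L^\infty(0,T;L^2(\Omega;\C))$ are duals of separable Banach spaces; Banach--Alaoglu then yields a (non-relabelled) subsequence with $\Psi^a_m \rightharpoonup^{*} \Psi$ in $\Y$ and $\partial_t\Psi^a_m \rightharpoonup^{*} \partial_t\Psi$ in $L^\infty(0,T;L^2(\Omega;\C))$. Weak-$*$ lower semicontinuity of norms transfers the estimates of Theorem \ref{thm:energy_aux} to $\Psi$.

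To pass to the limit in \eqref{eq:TDKS_weak_aux_m}, I would test against functions of the form $\varphi(t)\Phi$ with $\varphi \in C_c^\infty(0,T)$ and $\Phi \in W_M$ for fixed $M$. For $m \ge M$ every term in \eqref{eq:TDKS_weak_aux_m} is linear in $\Psi^a_m$ or $\partial_t \Psi^a_m$, so weak-$*$ convergence produces the time-integrated version of \eqref{eq:TDKS_weak_aux} against $\varphi(t)\Phi$. Density of $\bigcup_M W_M$ in $H^1_0(\Omega;\C)$ via (B1), together with density of $C_c^\infty(0,T)$ in $L^1(0,T)$, delivers \eqref{eq:TDKS_weak_aux} for a.e.\ $t \in (0,T)$ and every $\Phi \in H^1_0(\Omega;\C)$. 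The initial condition is recovered by integrating by parts in time against a function $\varphi \in C^1([0,T])$ with $\varphi(T)=0$ and $\varphi(0)=1$, using $\Psi^a_m(0) \to \Psi_0$ in $L^2(\Omega;\C)$ as guaranteed by (B3).

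Uniqueness is handled directly on the weak formulation: the difference $W:=\Psi_1-\Psi_2$ of two weak solutions lies in $W(0,T)$, solves \eqref{eq:TDKS_weak_aux} with $G\equiv 0$ and $W(0)=0$, and can be tested with $W(t)$ itself because $W \in L^2(0,T;H^1_0)$ and $\partial_t W \in L^2(0,T;H^{-1})$ yield $\tfrac{d}{dt}\|W(t)\|^2 = 2\,\Rea\langle \partial_t W,W\rangle$. Taking the imaginary part of the resulting identity annihilates the real quantities $\|\nabla W\|^2$ and $(VW,W)$, so $\tfrac{d}{dt}\|W\|^2 = 0$ and therefore $W \equiv 0$. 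Finally, the continuity $\Psi \in C([0,T];H^1_0(\Omega;\C))$ follows from the Lions--Magenes intermediate-derivative embedding applied to $\Psi \in L^\infty(0,T;Z)$ together with $\partial_t \Psi \in L^\infty(0,T;L^2(\Omega;\C))$. The most delicate point, in my view, is this last continuity assertion: the weak-$*$ bounds yield $C_w$-type continuity essentially for free, while genuine strong $H^1_0$-continuity in time requires an interpolation argument, since Aubin--Lions compactness on an $L^\infty$-in-time framework does not apply directly.
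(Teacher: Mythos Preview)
Your argument is correct and follows essentially the same Galerkin--compactness scheme as the paper. Two small differences are worth noting. First, the paper extracts weak limits in the Hilbert spaces $L^2(0,T;Z)$ and $Y$ (via reflexivity) and only afterwards upgrades to $L^\infty$-in-time by arguing that the $\|\cdot\|_Z$- and $\|\cdot\|$-balls are weakly closed; you go directly to weak-$*$ limits in $L^\infty(0,T;Z)$ and $L^\infty(0,T;L^2)$ via Banach--Alaoglu. Both routes are standard and equivalent here. Second, and more to the point you flagged: your worry that ``Aubin--Lions compactness on an $L^\infty$-in-time framework does not apply directly'' is unfounded. The paper invokes precisely such a result (Theorem II.5.16 in Boyer--Fabrie), namely the Simon variant asserting that $\{\Phi\in L^\infty(0,T;Z):\partial_t\Phi\in L^\infty(0,T;L^2)\}$ embeds compactly into $C([0,T];H^1_0)$ because $Z\hookrightarrow H^1_0$ is compact. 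Your alternative via interpolation---bounding $\|\Psi(t)-\Psi(s)\|_{H^1}$ by $\|\Psi(t)-\Psi(s)\|^{1/2}\|\Psi(t)-\Psi(s)\|_{H^2}^{1/2}$ and using $L^2$-continuity in time together with the uniform $H^2$-bound---also works, but the compact-embedding route is cleaner and avoids any interpolation-space identifications.
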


\begin{proof}
Consider a sequence of Galerkin approximations $\{ \Psi^a_m \}_m$. 
By Theorem \ref{thm:energy_aux} each function of this sequence lies in $W(0,T)$ and in particular $\Psi^a_m \in L^2(0,T;Z)$ and $\partial_t \Psi^a_m \in Y$.
The energy estimates of Theorem \ref{thm:energy_aux} together with (B3) guarantee that our Galerkin sequence is uniformly bounded in these spaces
by constants that depend only on the data of the problem. Since $W(0,T)$, $L^2(0,T;Z)$ and $Y$ are Hilbert spaces (hence reflexive),
there exists a subsequence $\{ \Psi^a_{m_j} \}_{j}$ that converges weakly in $W(0,T)$ and $L^2(0,T;Z)$ to a weak limit $\widehat{\Psi} \in W(0,T) \cap L^2(0,T;Z)$
with $\{ \partial_t \Psi^a_{m_j} \}_{j}$ converging weakly in $Y$ to $\partial_t \widehat{\Psi}$.
Moreover, the sequence $\{ \Psi^a_{m_j} \}_{j}$ converges strongly in $Y$ (by the compact embedding of $W(0,T)$ in $Y$; see, e.g., \cite{boyer2012mathematical,Lions1969}).
Using the linearity of operators and functionals in \eqref{eq:TDKS_weak_aux}, one can show that the limit $\widehat{\Psi}$ satisfies \eqref{eq:TDKS_weak_aux}
with $\widehat{\Psi}(0) = \Psi_0$; see also \cite{Evans2010} for similar arguments.
Hence, $\widehat{\Psi}$ is a weak solution to \eqref{eq:TDKS_weak_aux}.
Now, since for given positive constants $C_1$ and $C_2$ the sets
\begin{equation*}
\begin{split}
S_1 &:= \{ \Phi \in L^2(0,T;Z) \, : \, \| \Phi(t) \|_{H^2} \leq C_1 \text{ a.e. in $(0,T)$} \} \\
S_2 &:= \{ \Phi \in Y \, : \, \| \Phi(t) \| \leq C_2 \text{ a.e. in $(0,T)$} \} \\
\end{split}
\end{equation*}
are weakly closed, we have $\widehat{\Psi} \in S_1$ and $\partial_t \widehat{\Psi} \in S_2$.
With the same argument we obtain that $\widehat{\Psi}$ satisfies the energy estimates of
Theorem \ref{thm:energy_aux}.
Hence, $\widehat{\Psi} \in \Y$ and $\partial_t \widehat{\Psi} \in L^{\infty}(0,T;L^2(\Omega;\C))$.
Moreover, since $Z$ is compactly embedded in $H^1_0(\Omega;\C)$, the space
$W_\infty(0,T) := \{ \Phi \in \Y \, : \, \partial_t \Phi \in L^{\infty}(0,T;L^2(\Omega;\C)) \}$ 
is compactly embedded in $C([0,T];H^1_0(\Omega;\C))$; see \cite[Theorem II.5.16]{boyer2012mathematical}.
Therefore, $\widehat{\Psi} \in C([0,T];H^1_0(\Omega;\C))$.

To prove uniqueness, we proceed by contradiction and assume that there exists another function $\widetilde{\Psi} \in W(0,T)$,
distinct from $\widehat{\Psi}$, that solves \eqref{eq:TDKS_weak_aux}. If we define $\delta \Psi := \widehat{\Psi}-\widetilde{\Psi}$,
it is possible to show that this function satisfies the equation
\begin{equation}\label{eq:TDKS_weak_aux_err}
i (\partial_t \delta \Psi,\Phi) = (\nabla \delta \Psi, \nabla \Phi) + (V \delta \Psi,\Phi) \text{ a.e. in $(0,T)$, $\forall \Phi \in H^1_0(\Omega;\C)$}
\end{equation}
with $\delta \Psi(0) =0$. The energy estimate \eqref{eq:EN_1} remains valid for \eqref{eq:TDKS_weak_aux_err} and implies that $\| \delta \Psi \|_Y =0$,
which is a contradiction. Hence, the uniqueness of $\widehat{\Psi}$ follows.
\end{proof}

\section{Analysis of the full TDKS problem}\label{sec:full_NL}
In this section, we wish to show existence and uniqueness of a solution to \eqref{eq:TDKS_weak}.
We denote here the set $\Y$ by $\Y_T := L^\infty(0,T;Z)$ to stress the dependence on the final time $T$.

%To do so, consider three fixed positive constants $B_{A1}$, $B_{A2}$ and $B_{\circ}$, and a function $\Phi \in H^2(\Omega)$. We introduce the following conditions
%\begin{equation}\label{eq:COND_1}
%\| \Phi \|^2 \leq \exp(T) B_{A1} + \exp(T) \| \Psi_0\|^2,
%\end{equation}
%\begin{equation}\label{eq:COND_2}
%\| \nabla \Phi \|^2 \leq \exp(C_{\nabla} T) B_{A2} + \exp(C_{\nabla} T) C \| \nabla \Psi_0\|^2,
%\end{equation}
%where the constants $C_{\nabla}$, $C_{1,\Delta}$ and $C_{2,\Delta}$ are the same constants given in Theorem \ref{thm:energy_aux},
%and $C$ is the same constant given in (B3).
Consider a fixed positive constant $B_{\circ}$ and define
\begin{equation}\label{eq:COND_3}
C_{\circ}(T,B_{\circ}) := \exp(C_{1,\Delta} T) \Bigl[ B_{\circ} + \| \Delta \Psi_0 \|^2 + T C_{2,\Delta} \| \nabla \Psi_0 \|^2 \Bigr] ,
\end{equation}
where the constants $C_{1,\Delta}$ and $C_{2,\Delta}$ are the ones given in Theorem \ref{thm:energy_aux}.
For any $\widehat{T} \in (0,T]$ we define
\begin{equation}\label{eq:27}
S_{B_{\circ}}(\widehat{T}) := \{ \Phi \in \Y_{\widehat{T}} \, : \, \| \Phi(t) \|_Z \leq C_{\circ}(T,B_{\circ}) \text{ a.e. in $(0,\widehat{T})$} \}.
\end{equation}
Notice that, in view of the equivalence between the norms $\| \cdot \|_{H^2}$ and $\| \cdot \|_Z$ (see {\rm (B6)}),
the set $S_{B_{\circ}}(\widehat{T})$ is a ball in $\Y_{\widehat{T}}$ centered in zero and having radius depending
on $\widehat{T}$ and $B_{\circ}$.

Let $F$ be the nonlinear function given in \eqref{eq:F_NL}. 
Using {\rm (B5)}, {\rm (A5)}, \eqref{eq:27} and {\rm (B6)}, 
we have that for any $\Lambda \in S_{B_{\circ}}(T)$ the following estimates hold: 
\begin{equation}\label{eq:SSSS}
\begin{split}
\| F(\Lambda(t)) \|_{H^2} &= \| V_H(\Lambda(t)) \Lambda(t) + V_{xc}(\Lambda(t)) \Lambda(t) \|_{H^2} \\
&\leq C_b \| \Lambda(t) \|_{H^1}^2 \| \Lambda(t) \|_{H^2} + \widetilde{K} \| \Lambda(t) \|_{H^2} \\
&\leq C_b \| \Lambda(t) \|_{H^2}^3 + \widetilde{K} \| \Lambda(t) \|_{H^2} \\
&\leq C_b C_Z^3\| \Lambda(t) \|_{Z}^3 + \widetilde{K} C_Z \| \Lambda(t) \|_{Z} \\
&\leq C_b C_Z^3 C_{\circ}(T,B_{\circ})^3 + \widetilde{K} C_Z C_{\circ}(T,B_{\circ}),
\end{split}
\end{equation}
where $\widetilde{K}$ is given in {\rm (A5)}, $C_Z$ in {\rm (B6)} and $B_b$ in {\rm (B5)};
The estimate \eqref{eq:SSSS} implies that $F(\Lambda) \in \Y_{T}$.

Next, we introduce the map $\mathcal{A} : S_{B_{\circ}}(\widehat{T}) \rightarrow \Y_{\widehat{T}}$
defined by $\mathcal{A}(\Lambda) := \widetilde{\Psi}$, where $\widetilde{\Psi} \in \Y_{\widehat{T}}$
is the unique solution to
\begin{equation}\label{eq:TDKS_weak_aux_2}
i (\partial_t \widetilde{\Psi},\Phi) = (\nabla \widetilde{\Psi}, \nabla \Phi) + (V \widetilde{\Psi},\Phi) + (F(\Lambda),\Phi) \text{ a.e. in $(0,\widehat{T})$}
\end{equation}
for all $\Phi \in H^1_0(\Omega;\C)$ and with $\widetilde{\Psi}(0) = \Psi_0$.
Notice that, since $F(\Lambda) \in \Y_{\widehat{T}}$, this problem is
uniquely solvable in $\Y_{\widehat{T}}$ by Theorem \ref{thm:ex_un_sol}. Therefore, the map $\mathcal{A}$
is well defined. Let us prove the following property of $\mathcal{A}$.
\begin{lemma}\label{lemma:1}
For any $T^\star \in (0,T]$ such that
\begin{equation}\label{eq:cond_T}
T^\star \leq \frac{B_{\circ}}{C_{3,\Delta}\bigl[C_b C_Z^3 C_{\circ}(T,B_{\circ})^3 + \widetilde{K} C_Z C_{\circ}(T,B_{\circ})\bigr]^2}
\end{equation}
the set $S_{B_{\circ}}(T^\star)$ is invariant under $\mathcal{A}$, that is
$\mathcal{A}(\Lambda) \in S_{B_{\circ}}(T^\star)$ for any $\Lambda \in S_{B_{\circ}}(T^\star)$.
\end{lemma}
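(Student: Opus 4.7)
The plan is to combine three ingredients already available in the excerpt: the $H^2$ energy estimate \eqref{eq:EN_3} from Theorem \ref{thm:energy_aux}, the uniform-in-time bound \eqref{eq:SSSS} on $\|F(\Lambda)\|_{H^2}$ when $\Lambda \in S_{B_\circ}(T^\star)$, and the smallness condition \eqref{eq:cond_T} on $T^\star$. The target is to show that the image $\widetilde{\Psi} = \mathcal{A}(\Lambda)$ satisfies $\|\Delta\widetilde{\Psi}(t)\|^2 \leq C_\circ(T,B_\circ)$ for almost every $t \in (0,T^\star)$, which via (B6) is precisely the statement $\widetilde{\Psi} \in S_{B_\circ}(T^\star)$.

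First I would fix $\Lambda \in S_{B_\circ}(T^\star)$ and use \eqref{eq:SSSS} to record that $F(\Lambda) \in \Y_{T^\star}$ with
$$\|F(\Lambda)\|_{\Y_{T^\star}} \leq M := C_b C_Z^3 C_\circ(T,B_\circ)^3 + \widetilde{K} C_Z C_\circ(T,B_\circ).$$
This legitimates the use of Theorem \ref{thm:ex_un_sol}, which yields a unique $\widetilde{\Psi} \in \Y_{T^\star}$ solving \eqref{eq:TDKS_weak_aux_2} and satisfying all the energy estimates of Theorem \ref{thm:energy_aux}. Applying \eqref{eq:EN_3} with source $G = F(\Lambda)$ on the interval $(0,T^\star)$ then gives, for a.e. $t \in (0,T^\star)$,
$$\|\Delta \widetilde{\Psi}(t)\|^2 \leq \exp(C_{1,\Delta} T^\star)\Bigl[\|\Delta \Psi_0\|^2 + T^\star C_{2,\Delta}\|\nabla \Psi_0\|^2 + T^\star C_{3,\Delta} M^2\Bigr].$$
Since $T^\star \leq T$, the constants $C_{j,\Delta}$ corresponding to the shorter interval are dominated by their $T$-level counterparts (they are monotone nondecreasing in the final time), and I would use the latter throughout.

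Next, the condition \eqref{eq:cond_T} is designed so that $T^\star C_{3,\Delta} M^2 \leq B_\circ$. Substituting this into the previous estimate and using $T^\star \leq T$ once more produces
$$\|\Delta \widetilde{\Psi}(t)\|^2 \leq \exp(C_{1,\Delta} T)\bigl[\|\Delta \Psi_0\|^2 + T C_{2,\Delta}\|\nabla \Psi_0\|^2 + B_\circ\bigr] = C_\circ(T,B_\circ),$$
which is exactly the invariance bound required by \eqref{eq:27}.

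The main obstacle is purely a bookkeeping one. Because the Hartree term makes $F$ cubic in $\Lambda$, the source bound $M$ scales like $C_\circ(T,B_\circ)^3$, so $M^2$ scales like $C_\circ(T,B_\circ)^6$; absorbing this quantity into the slack $B_\circ$ hard-coded inside $C_\circ(T,B_\circ)$ is possible only by paying the price of the (possibly very short) time $T^\star$ given in \eqref{eq:cond_T}. No contraction argument is needed at this stage: the nonlinearity enters only through a pointwise upper bound on $\|F(\Lambda(t))\|_{H^2}$, and the rest is a direct invocation of the linear theory from Section \ref{sec:auxiliary}.
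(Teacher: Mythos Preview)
Your proposal is correct and follows essentially the same route as the paper: apply the energy estimate \eqref{eq:EN_3} with $G=F(\Lambda)$, bound $\|F(\Lambda)\|_{\Y_{T^\star}}$ via \eqref{eq:SSSS}, and absorb the nonlinear contribution into $B_\circ$ using \eqref{eq:cond_T}. Your explicit remark that the constants $C_{j,\Delta}$ are monotone in the final time (so one may safely work with their $T$-level values) is a useful clarification that the paper leaves implicit when it passes from $C_\circ(T^\star,B_\circ)$ to the membership condition in $S_{B_\circ}(T^\star)$.
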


\begin{proof}
Take any $\Lambda \in \Y_{T^\star}$ and consider $\Psi = \mathcal{A}(\Lambda)$.
Since $F(\Lambda) \in \Y_{T^\star}$, we can use the energy estimates of Theorem \ref{thm:energy_aux}.
%Using \eqref{eq:EN_1}, \eqref{eq:SSSS} and \eqref{eq:cond_T}, we write
%\begin{equation*}
%\begin{split}
%\| \Psi(t) \|^2 &\leq \exp(T^*) \Bigl[ \| \Psi_0 \|^2 + T^\star \| F(\Lambda) \|_{\Y_{T^\star}}^2 \Bigr] \\
%&\leq \exp(T^*) \| \Psi_0 \|^2 +  \exp(T^*) T^\star \Bigl[ C_b B_{A1} \sqrt{B_{\circ}} + \widetilde{K} \sqrt{B_{\circ}} \Bigr]^2 \\
%&\leq \exp(T^*) \| \Psi_0 \|^2 +  \exp(T^*) B_{A1},
%\end{split}
%\end{equation*}
%for almost all $t$ in $(0,T^\star)$.
%
%Using \eqref{eq:EN_2}, \eqref{eq:SSSS} and \eqref{eq:cond_T}, we get
%\begin{equation*}
%\begin{split}
%\| \nabla \Psi(t) \|^2 &\leq \exp(C_\nabla T^\star) \Bigl[ C \| \nabla \Psi_0 \|^2 + T^\star \| F(\Lambda) \|_{\Y_{T^\star}}^2 \Bigr] \\
%&\leq \exp(C_\nabla T^\star) C \| \nabla \Psi_0 \|^2 + \exp(C_\nabla T^\star) T^\star \Bigl[ C_b B_{A1} \sqrt{B_{\circ}} + \widetilde{K} \sqrt{B_{\circ}} \Bigr]^2 \\
%&\leq \exp(C_\nabla T^\star) C \| \nabla \Psi_0 \|^2 + \exp(C_\nabla T^\star) B_{A2},
%\end{split}
%\end{equation*}
%for almost all $t$ in $(0,T^\star)$.
In particular, using \eqref{eq:EN_3}, \eqref{eq:SSSS} and \eqref{eq:cond_T}, we obtain
\begin{equation*}
\begin{split}
\| \Psi(t) \|_Z^2 &\leq \exp(C_{1,\Delta} T^\star) \Bigl[ \| \Delta \Psi_0 \|^2 + T^\star \bigl[ C_{2,\Delta} \| \nabla \Psi_0 \|^2 + C_{3,\Delta} \| F(\Lambda) \|_{\Y_{T^\star}}^2 \bigr] \Bigr] \\
&\leq \exp(C_{1,\Delta} T^\star) \Bigl[ \| \Delta \Psi_0 \|^2 + T^\star C_{2,\Delta} \| \nabla \Psi_0 \|^2 \Bigr] \\
&+  \exp(C_{1,\Delta} T^\star) T^\star C_{3,\Delta} \bigl[C_b C_Z^3 C_{\circ}(T,B_{\circ})^3 + \widetilde{K} C_Z C_{\circ}(T,B_{\circ})\bigr]^2 \\
&\leq \exp(C_{1,\Delta} T^\star) \Bigl[ \| \Delta \Psi_0 \|^2 + T^\star C_{2,\Delta} \| \nabla \Psi_0 \|^2 \Bigr] 
+ \exp(C_{1,\Delta} T^\star) B_{\circ} \\
&=C_{\circ}(T^\star,B_{\circ})
\end{split}
\end{equation*}
for almost all $t$ in $(0,T^\star)$, and the claim follows.
\end{proof}

The next lemma shows that the map 
$\mathcal{A} : S_{B_{\circ}}(\widehat{T}) \rightarrow S_{B_{\circ}}(\widehat{T})$
is a contraction for a sufficiently small $\widehat{T} \in (0,T^\star]$.

\begin{lemma}\label{lemma:contraction}
Consider $T^\star \in (0,T]$ such that \eqref{eq:cond_T} holds.
There exists a $T^{\star \star} \in (0,T^\star]$ such that for any 
$\widehat{T} \in (0,T^{\star \star}]$ the map
$\mathcal{A} : S_{B_{\circ}}(\widehat{T}) \rightarrow S_{B_{\circ}}(\widehat{T})$
is a contraction (with respect to the norm $\| \cdot \|_{\Y}$).
\end{lemma}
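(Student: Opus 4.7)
The plan is to mimic the uniqueness argument in Theorem \ref{thm:ex_un_sol} combined with the Lipschitz bounds for $F$ provided by (B5) and (A5). Take any two $\Lambda_1,\Lambda_2 \in S_{B_{\circ}}(\widehat{T})$ and set $\Psi_i := \mathcal{A}(\Lambda_i)$, $\delta\Psi := \Psi_1 - \Psi_2$, $G := F(\Lambda_1) - F(\Lambda_2)$. By linearity of \eqref{eq:TDKS_weak_aux_2} and the fact that both $\Psi_i$ start from the same initial datum $\Psi_0$, the difference $\delta\Psi$ solves the auxiliary problem \eqref{eq:TDKS_weak_aux} with source $G$ and zero initial condition. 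Provided $G \in \Y_{\widehat{T}}$, Theorem \ref{thm:energy_aux} applies and we can invoke \eqref{eq:EN_3}, which, because $\Psi_0=0$ here, collapses to
\begin{equation*}
\| \Delta \delta\Psi(t) \|^2 \leq \exp(C_{1,\Delta}\widehat{T})\, \widehat{T}\, C_{3,\Delta}\, \| G \|_{\Y_{\widehat{T}}}^2 \quad \text{a.e. in } (0,\widehat{T}).
\end{equation*}

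Next I would estimate $\| G \|_{\Y_{\widehat{T}}}$ in terms of $\| \Lambda_1 - \Lambda_2 \|_{\Y_{\widehat{T}}}$. Splitting $F = V_H(\cdot)\cdot + V_{xc}(\cdot)\cdot$, I would apply \eqref{eq:11} to the Hartree part and \eqref{eq:Lipschitz3} to the exchange-correlation part. Since $\Lambda_1,\Lambda_2 \in S_{B_{\circ}}(\widehat{T})$, (B6) gives $\| \Lambda_i(t) \|_{H^2} \leq C_Z C_{\circ}(T,B_{\circ})$ a.e., so
\begin{equation*}
\| F(\Lambda_1(t)) - F(\Lambda_2(t)) \|_{H^2} \leq \bigl( 2 C_c C_Z^2 C_{\circ}(T,B_{\circ})^2 + \widetilde{K} \bigr) \| \Lambda_1(t) - \Lambda_2(t) \|_{H^2}.
\end{equation*}
Denoting the bracketed Lipschitz constant by $L_F$ and taking the essential supremum in $t$, I obtain $\| G \|_{\Y_{\widehat{T}}} \leq L_F \| \Lambda_1 - \Lambda_2 \|_{\Y_{\widehat{T}}}$; in particular $G \in \Y_{\widehat{T}}$, which legitimizes the previous step.

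Combining the two bounds and using the equivalence of $\|\cdot\|_{H^2}$ and $\|\cdot\|_Z$ from (B6), I get
\begin{equation*}
\| \mathcal{A}(\Lambda_1) - \mathcal{A}(\Lambda_2) \|_{\Y_{\widehat{T}}}^2 \leq C_Z^2\, \exp(C_{1,\Delta}\widehat{T})\, \widehat{T}\, C_{3,\Delta}\, L_F^2\, \| \Lambda_1 - \Lambda_2 \|_{\Y_{\widehat{T}}}^2 =: \Theta(\widehat{T})^2\, \| \Lambda_1 - \Lambda_2 \|_{\Y_{\widehat{T}}}^2.
\end{equation*}
Since $\Theta(\widehat{T}) \to 0$ as $\widehat{T} \to 0^{+}$ (the factor $\widehat{T}$ being decisive), one can pick any $T^{\star\star} \in (0,T^{\star}]$ with $\Theta(T^{\star\star}) < 1$, and then $\mathcal{A}$ is a contraction on $S_{B_{\circ}}(\widehat{T})$ for every $\widehat{T} \in (0,T^{\star\star}]$ — noting that Lemma \ref{lemma:1} ensures $\mathcal{A}$ actually maps this set into itself.

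The only nontrivial step is checking that the Lipschitz constant $L_F$ really can be taken uniformly over $S_{B_{\circ}}(\widehat{T})$ and depends only on the fixed data (in particular on $T$ through $C_{\circ}(T,B_{\circ})$, not on the smaller $\widehat{T}$); this is what allows the linear factor $\widehat{T}$ in $\Theta(\widehat{T})^2$ to drive the contraction constant below one. All other estimates are direct consequences of the auxiliary energy estimate \eqref{eq:EN_3} and the hypotheses (A5), (B5), (B6).
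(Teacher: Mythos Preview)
Your proof is correct and follows essentially the same route as the paper: subtract the two auxiliary problems, apply the energy estimate \eqref{eq:EN_3} with zero initial data to $\delta\Psi$, bound $\|F(\Lambda_1)-F(\Lambda_2)\|_{\Y_{\widehat{T}}}$ via \eqref{eq:11} and \eqref{eq:Lipschitz3} using the uniform $Z$-bound on $S_{B_\circ}(\widehat{T})$, and let the explicit factor $\widehat{T}$ force the contraction constant below one. Your observation that $L_F$ depends only on $C_\circ(T,B_\circ)$ (the radius fixed by the full time $T$ in \eqref{eq:27}) rather than on $\widehat{T}$ is exactly the point that makes $\Theta(\widehat{T})\to 0$ work, and it is in fact slightly cleaner than the paper's bookkeeping, which writes the analogous constant as $C_{\circ\circ}(T^\star)$.
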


\begin{proof}
Consider any two functions $\Lambda_1$ and $\Lambda_2$ in $S_{B_{\circ}}(T^{\star})$
and define the function $\delta \Psi := \mathcal{A}(\Lambda_1) - \mathcal{A}(\Lambda_2)$.
This function solves the problem
\begin{equation}\label{eq:TDKS_weak_aux_3}
i (\partial_t \delta \Psi,\Phi) = (\nabla \delta \Psi, \nabla \Phi) + (V \delta \Psi,\Phi) + (F(\Lambda_1) - F(\Lambda_2),\Phi) \text{ a.e. in $(0,T^\star)$}
\end{equation}
for all $\Phi \in H^1_0(\Omega;\C)$ and $\delta \Psi(0) = 0$.
Since $F(\Lambda_1) - F(\Lambda_2) \in \Y_{T^\star}$, we can use the energy estimate \eqref{eq:EN_3}
to write
%\begin{equation}\label{eq:EN_11}
%\| \delta \Psi(t) \|^2 \leq \exp(T^\star) T^\star \| F(\Lambda_1) - F(\Lambda_2) \|_{\Y_{T^\star}}^2,
%\end{equation}
%\begin{equation}\label{eq:EN_22}
%\| \nabla \delta \Psi(t) \|^2 \leq \exp(C_\nabla T^\star) T^\star \| F(\Lambda_1) - F(\Lambda_2) \|_{\Y_{T^\star}}^2,
%\end{equation}
\begin{equation}\label{eq:EN_33}
\| \delta \Psi(t) \|_Z^2 \leq \exp(C_{1,\Delta} T^\star) T^\star C_{3,\Delta} \| F(\Lambda_1) - F(\Lambda_2) \|_{\Y_{T^\star}}^2.
\end{equation}
Let us estimate the term $\| F(\Lambda_1) - F(\Lambda_2) \|_{\Y_{T^\star}}^2$.
Using {\rm (B5)} and {\rm (A5)}, we get
\begin{equation*}
\begin{split}
\| F(\Lambda_1) - F(\Lambda_2) \|_{\Y_{T^\star}} &\leq \| f(\Lambda_1) - f(\Lambda_2) \|_{\Y_{T^\star}} + \| V_{xc}(\Lambda_1)\Lambda_1 - V_{xc}(\Lambda_2)\Lambda_2 \|_{\Y_{T^\star}} \\
&\leq C_c \bigl( \| \Lambda_1 \|_{\Y_{T^\star}}^2 + \| \Lambda_2 \|_{\Y_{T^\star}}^2 \bigr) \| \Lambda_1 - \Lambda_2 \|_{\Y_{T^\star}} + \widetilde{K} \| \Lambda_1 - \Lambda_2 \|_{\Y_{T^\star}}, \\
%&\leq \bigl[ C_c 2 (B_{A1} + B_{A2} + B_{\circ}) + \widetilde{K} \bigr] \| \Lambda_1 - \Lambda_2 \|_{\Y_{T^\star}},
\end{split}
\end{equation*}
where $\widetilde{K}$ is as in \eqref{eq:Lipschitz3} and $C_c$ as in \eqref{eq:11}. Since $\Lambda_1,\Lambda_2 \in S_{B_{\circ}}(T^{\star})$, we have
$\| \Lambda_j(t) \|_Z^2 \leq C_{\circ}(T^\star,B_{\circ})$, for $j=1,2$.
Therefore, one obtains
\begin{equation}\label{eq:est_FF}
\| F(\Lambda_1) - F(\Lambda_2) \|_{\Y_{T^\star}} \leq C_{\circ \circ}( T^\star ) \| \Lambda_1 - \Lambda_2 \|_{\Y_{T^\star}},
\end{equation}
where $C_{\circ \circ}(T^\star) = \widetilde{K} + 2 C_c C_{\circ}(T^\star,B_{\circ})$.
Using \eqref{eq:est_FF} into \eqref{eq:EN_33}, we get
\begin{equation}
\| \delta \Psi(t) \|_Z^2 \leq C_{\circ \circ}(T^\star)^2 \exp(C_{1,\Delta} T^\star) T^\star C_{3,\Delta} \| \Lambda_1 - \Lambda_2 \|_{\Y_{T^\star}}^2.
\end{equation}
This implies that
\begin{equation*}
\| \mathcal{A}(\Lambda_1) - \mathcal{A}(\Lambda_2) \|_{\widehat{Y}_{T^\star}} = \| \delta \Psi \|_{\widehat{Y}_{T^\star}} \leq g(T^\star) \| \Lambda_1 - \Lambda_2 \|_{\Y_{T^\star}},
\end{equation*}
where
\begin{equation*}
g(T^\star) = C_Z C_{\circ \circ}(T^\star) \sqrt{\exp(C_{1,\Delta} T^\star) T^\star C_{3,\Delta}}.
\end{equation*}
Since the map $\widehat{T} \mapsto g(\widehat{T})$ is continuous and monotonically increasing in $[0,T^\star]$ 
with $g(0)=0$, there exists a $T^{\star \star} \in (0,T^\star]$
such that $g(T^{\star \star}) < 1$, and we obtain
\begin{equation*}
\| \mathcal{A}(\Lambda_1) - \mathcal{A}(\Lambda_2) \|_{\widehat{Y}_{T^{\star \star}}} = \| \delta \Psi \|_{\widehat{Y}_{T^\star}} \leq g(T^\star) \| \Lambda_1 - \Lambda_2 \|_{\Y_{T^{\star \star}}},
\end{equation*}
which shows that the mapping $\mathcal{A} : S_{B_{\circ}}(T^{\star \star}) \rightarrow S_{B_{\circ}}(T^{\star \star})$ is a contraction.
\end{proof}

\begin{remark}
The results of Lemmas \ref{lemma:1} and \ref{lemma:contraction}
still hold if one assumes \eqref{eq:Lipschitz3} to hold only on bounded subsets of $Z$.
\end{remark}

We are now ready to show existence of a unique solution to the TDKS problem \eqref{eq:TDKS_weak}.

\begin{theorem}[Existence and uniqueness of solution to the TDKS problem]\label{thm:existence_uniqueness}
There exists a unique solution $\Psi \in W(0,T)$ to \eqref{eq:TDKS_weak} with $\Psi(0)=\Psi_0$.
In particular, it holds that
\begin{equation*}
\Psi \in \Y, 
\quad 
\partial_t \Psi \in L^{\infty}(0,T;L^2(\Omega;\C))
\; \text{ and} \quad 
\Psi \in C([0,T];H^1_0(\Omega;\C)).
\end{equation*}
\end{theorem}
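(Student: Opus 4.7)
The plan is to combine Lemmas \ref{lemma:1} and \ref{lemma:contraction} with the Banach fixed-point theorem to obtain a local solution, bootstrap its regularity via Theorem \ref{thm:ex_un_sol}, and extend it to the full interval $[0,T]$ by a continuation argument; uniqueness is proved in parallel by an energy estimate on the difference of two solutions.

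First, for local existence, I would observe that $S_{B_{\circ}}(T^{\star\star})$ is a closed ball of the Banach space $\widehat{Y}_{T^{\star\star}}$, hence complete for the induced metric, and that $\mathcal{A}$ is a self-map and a contraction on it by Lemmas \ref{lemma:1} and \ref{lemma:contraction}. Banach's theorem then yields a unique fixed point $\Psi=\mathcal{A}(\Psi)\in S_{B_{\circ}}(T^{\star\star})$, which by construction of $\mathcal{A}$ is a weak solution of \eqref{eq:TDKS_weak} on $(0,T^{\star\star})$ with $\Psi(0)=\Psi_0$. Since \eqref{eq:SSSS} ensures $F(\Psi)\in \widehat{Y}_{T^{\star\star}}$, applying Theorem \ref{thm:ex_un_sol} to the linear auxiliary problem with $G=F(\Psi)$ immediately upgrades $\Psi$ to the claimed regularity $\Psi\in\widehat{Y}_{T^{\star\star}}\cap C([0,T^{\star\star}];H^1_0(\Omega;\C))$ with $\partial_t\Psi\in L^\infty(0,T^{\star\star};L^2(\Omega;\C))$, so in particular $\Psi\in W(0,T^{\star\star})$.

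For uniqueness on $[0,T]$, I would suppose $\Psi_1,\Psi_2\in W(0,T)$ are two such solutions; then $\delta\Psi:=\Psi_1-\Psi_2$ solves a linear inhomogeneous problem with source $F(\Psi_1)-F(\Psi_2)$ and $\delta\Psi(0)=0$. Testing with $\delta\Psi$ and taking the imaginary part kills the gradient and real-potential contributions, yielding
\[
\tfrac{1}{2}\tfrac{d}{dt}\|\delta\Psi\|^2 = \Ima\bigl(F(\Psi_1)-F(\Psi_2),\delta\Psi\bigr).
\]
The Lipschitz estimates \eqref{eq:Lipschitz1}--\eqref{eq:Lipschitz2}, combined with the uniform $H^1$-control of $\Psi_1,\Psi_2$ inherent in the $\widehat{Y}$-regularity, dominate the right-hand side by $C\|\delta\Psi\|^2$, and Gr\"onwall forces $\delta\Psi\equiv 0$. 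To extend the local solution to the full interval, I would take $\Psi(T^{\star\star})\in Z$ as a new initial datum and iterate the fixed-point construction: since the constants $C_{1,\Delta},C_{2,\Delta},C_{3,\Delta},C_b,\widetilde{K},C_Z$ depend only on $T$, $V$, and $\Omega$, the next local existence time $T^{\star\star}_1$ is controlled solely by $\|\Psi(T^{\star\star})\|_Z$. Denoting by $T_{\max}\in(0,T]$ the supremum of admissible existence times, I would argue by contradiction that $T_{\max}=T$ using a uniform a priori bound on $\|\Psi(t)\|_Z$.

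The main obstacle I expect is producing this uniform a priori $Z$-bound. Chaining \eqref{eq:EN_3} with \eqref{eq:SSSS} naively yields an inequality of the form $\|\Psi(t)\|_Z^2\leq A + B\,\|\Psi\|_{\widehat{Y}_t}^6$, driven by the cubic Hartree nonlinearity, which is not self-closing under a direct Gr\"onwall argument and in principle permits finite-time blow-up. To break this, I would first exploit the $L^2$-mass conservation $\|\Psi(t)\|=\|\Psi_0\|$, which follows from the imaginary part of testing \eqref{eq:TDKS_weak} with $\Psi$ together with the reality of $V$, $V_H(\Psi)$ and $V_{xc}(\Psi)$. Next, I would derive an $H^1$-energy identity by formally testing with $\partial_t\bar\Psi$ and taking the real part, exploiting the positive-definite energy structure of the Hartree term (so that $\Rea(V_H(\Psi)\Psi,\partial_t\Psi)=\tfrac{1}{2}\tfrac{d}{dt}E_H(\Psi)$) and absorbing the $V_{xc}$ contribution via \eqref{eq:Lipschitz2}. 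This would yield a uniform bound on $\|\nabla\Psi(t)\|$, which, fed into $\|f(\Psi)\|_{H^2}\leq C_b\|\Psi\|_{H^1}^2\|\Psi\|_{H^2}$, linearizes the growth of the source in $\|\Psi\|_{H^2}$ and allows me to close the argument by a standard Gr\"onwall inequality applied to the $Z$-norm.
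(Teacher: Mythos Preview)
Your local existence, regularity bootstrap, and uniqueness arguments match the paper's Steps 1 and 4 essentially verbatim, so those parts are fine.

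The gap is in the continuation step. Your plan to obtain a uniform a priori $H^1$-bound by testing with $\partial_t\bar\Psi$ and invoking an energy identity does not go through under the standing hypotheses. The potential $V(x,t)=V_0(x)+V_u(x)u(t)$ has $u\in L^\infty(0,T;\R)$ only (assumption {\rm (A3)}), so $\partial_t V$ is not defined; the term $\Rea(V\Psi,\partial_t\Psi)$ therefore cannot be rewritten as $\tfrac12\tfrac{d}{dt}(V\Psi,\Psi)-\tfrac12(\partial_tV\,\Psi,\Psi)$ and absorbed. This is precisely the low-regularity-in-time setting the paper targets; an energy-conservation argument would reintroduce the time-differentiability assumption the paper is written to avoid. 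A secondary issue is that {\rm (A5)} gives no variational structure for $V_{xc}$, so ``absorbing the $V_{xc}$ contribution via \eqref{eq:Lipschitz2}'' leaves you with $|\Rea(V_{xc}(\Psi)\Psi,\partial_t\Psi)|\le K\|\Psi\|\,\|\partial_t\Psi\|$, and you have no independent control of $\|\partial_t\Psi\|$ at this stage. Without the uniform $H^1$-bound, your subsequent linearisation of $\|f(\Psi)\|_{H^2}\le C_b\|\Psi\|_{H^1}^2\|\Psi\|_{H^2}$ and the Gr\"onwall closure for the $Z$-norm both collapse, and the blow-up alternative is not excluded.

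The paper circumvents all of this by a different mechanism (its Step 3): rather than proving an a priori bound first and then concluding that the local existence times do not shrink, it \emph{chooses} the sequence of local times $T_k^d$ in terms of the endpoint $Z$-norms $a_{k-1}=\|\Psi(T_{k-1}^{\star\star})\|_Z^2$, specifically $T_k^d\sim 1/(k\,a_{k-1}^3)$, and shows via a discrete recursion derived from \eqref{eq:EN_3} and \eqref{eq:SSSS} that this choice keeps $\{a_k\}$ bounded while $\sum_k T_k^d$ diverges like the harmonic series. The uniform $Z$-bound thus emerges from the construction itself rather than from an energy identity, and only the $L^\infty$-in-time estimates of Theorem~\ref{thm:energy_aux} are used.
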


\begin{proof}
We proceed in 4 steps.

\underline{Step 1}: Existence and uniqueness for sufficiently short time interval.\\
Consider the problem \eqref{eq:TDKS_weak} defined on a time interval $[0,T_1^{\star\star}]$,
where $T_1^{\star\star} \in (0,T^\star]$ is chosen sufficiently small as in Lemma \ref{lemma:contraction}. 
By Lemma \ref{lemma:contraction} the mapping 
$\mathcal{A} : S_{B_{\circ}}(T_1^{\star\star}) \rightarrow S_{B_{\circ}}(T_1^{\star\star})$ is a contraction.
Therefore, the Banach-Caccioppoli fixed-point theorem \cite{Ciarlet2013,Evans2010} implies that there exists
a unique solution $\widehat{\Psi}_1 \in S_{B_{\circ}}(T_1^{\star\star})$ to the problem \eqref{eq:TDKS_weak}.

\underline{Step 2}: Existence for the entire time interval $[0,T]$.\\
We follow the same procedure used in \cite[Section 9.2.1]{Evans2010} to construct a unique
solution to \eqref{eq:TDKS_weak} on $[0,T]$.
Since $\widehat{\Psi}_1(t) \in Z$ for almost all $t$ in $[0,T_1^{\star\star}]$, we can assume that
$\widehat{\Psi}_1(T_1^{\star\star}) \in Z$ (upon redefining $T_1^{\star\star}$ if necessary).

Now, we define a new problem which is identical to \eqref{eq:TDKS_weak}, but defined on the time interval $[T_1^{\star \star},T]$ and having initial condition 
$\widehat{\Psi}_2(T_1^{\star \star})=\widehat{\Psi}_1(T_1^{\star \star})$.
By repeating the same arguments as in Step 1, one can show that there exists a $T_2^{\star \star} \in (T_1^{\star \star},T]$ 
such that the new problem is uniquely solved on $[T_1^{\star \star},T_2^{\star \star}]$ by $\widehat{\Psi}_2$ which lies
in a ball similar to $S_{B_{\circ}}(T_1^{\star\star})$ whose radius depends on the length of the interval $[T_1^{\star \star},T_2^{\star \star}]$.
Since $\widehat{\Psi}_2(t) \in Z$ for almost all $t$ in $[T_1^{\star \star},T_2^{\star \star}]$, we can assume that
$\widehat{\Psi}_2(T_2^{\star\star}) \in Z$ (upon redefining $T_2^{\star\star}$ if necessary).

This argument can be repeated recursively. At the $k^{th}$ recursion step we obtain the existence
of the solution $\widehat{\Psi}_k$ to \eqref{eq:TDKS_weak} on $[T_{k-1}^{\star \star},T_k^{\star \star}]$ with initial condition $\widehat{\Psi}_k(T_{k-1}^{\star \star})=\widehat{\Psi}_{k-1}(T_{k-1}^{\star \star})$.
In Step 3 we prove that a finite number of these intervals $[T^{**}_{k-1}, T^{**}_k]$ are sufficient to cover the whole interval $[0,T]$, that is
$\cup_{k=1}^M [T_{k-1}^{\star \star},T_k^{\star \star}] = [0,T]$, with $T_0^{\star \star}=0$,
a positive finite integer $M$,
and a solution $\Psi \in \Y$ to \eqref{eq:TDKS_weak} on $[0,T]$.
%The fact that only finitely many recursion steps are needed is proved in Step 3.

\underline{Step 3}: Existence of a finite covering of $[0,T]$.\\
In Step 2 we created a sequence $\{T_k^{\star \star}\}_k$ and claimed that there exists a finite positive integer $M$ such that $\cup_{k=1}^M [T_{k-1}^{\star \star},T_k^{\star \star}] = [0,T]$.
To prove this result, we look at the differences $T^d_k := T_k^{\star \star} - T_{k-1}^{\star \star}$ for $k=1,2,\dots$ and notice
that choosing $T_k^{\star \star}$ is equivalent to choose $T^d_k$. In fact, at the $k^{th}$ recursion step the solution $\widehat{\Psi}_k$
is defined on $[T_{k-1}^{\star \star},T_k^{\star \star}]$, and a simple time shift allows us to pose the corresponding problem 
on $[0,T^d_k]$ (instead of $[T_{k-1}^{\star \star},T_k^{\star \star}]$). 
We denote by $\widetilde{\Psi}_k$ the (shifted) solution defined on $[0,T^d_k]$
and notice that $\widetilde{\Psi}_k(t) = \widehat{\Psi}_k(T_{k-1}^{\star \star}+t)$.
Therefore, to prove the existence of a finite $M$ it suffices
to construct a sequence $\{T^d_k\}_k$ with $\sum_{k=1}^\infty T^d_k = \infty$ and such that each $T^d_k$ is sufficiently small
in the sense of Lemmas \ref{lemma:1} and \ref{lemma:contraction}. 
To be more precise, we will construct a sequence $\{T_k^d\}_k$ that converges
to zero ``sufficiently slowly'' and then choose $B_{\circ,k}=O(T_k^d)$.
Moreover, our choices of $\{T_k^d\}_k$ and $\{ B_{\circ,k} \}_k$ will satisfy
\begin{equation}\label{eq:cond_T_1}
T^d_k \leq \frac{B_{\circ,k}}{C_{3,\Delta}\bigl[C_b C_Z^3 C_{\circ,k}(T,B_{\circ,k})^3 + \widetilde{K} C_Z C_{\circ,k}(T,B_{\circ,k})\bigr]^2},
\end{equation}
and 
\begin{equation}\label{eq:cond_T_2}
g_k(T^d_k) < 1,
\end{equation}
where
\begin{equation}\label{eq:COND_3_k}
\begin{split}
C_{\circ,k}(T,B_{\circ,k}) &:= \exp(C_{1,\Delta} T) \Bigl[ B_{\circ,k} + a_k (1 + T C_{2,\Delta} ) \Bigr], \\
g_k(T^d_k) &:= C_Z C_{\circ \circ,k}(T^d_k) \sqrt{\exp(C_{1,\Delta} T^d_k) T^d_k C_{3,\Delta}},\\
C_{\circ \circ,k}(T^d_k) &:= \widetilde{K} + 2 C_c C_{\circ,k}(T^d_k,B_{\circ,k}), \\
\end{split}
\end{equation}
where we defined $a_k:=\| \widetilde{\Psi}_k(T_k^d) \|_Z^2$. 
Notice that the conditions \eqref{eq:cond_T_1} and \eqref{eq:cond_T_2} are exactly the ones considered in Lemmas \ref{lemma:1} and \ref{lemma:contraction}.

Following the proof of Theorem \ref{thm:energy_aux}, setting $G=F(\widehat{\Psi}_k)$ and using \eqref{eq:SSSS}, one gets
\begin{equation}\label{eq:EN_NEWW}
\begin{split}
a_k &\leq \exp(C_{1,\Delta} T^d_k) \Bigl[ a_{k-1} ( 1 +  T^d_k C_{2,\Delta} ) \\
&+ T^d_k C_{3,\Delta} \Bigl( C_b C_Z^3 C_{\circ,k}(T^d_k,B_{\circ,k})^3 + \widetilde{K} C_Z C_{\circ,k}(T^d_k,B_{\circ,k}) \Bigr) \Bigr].
\end{split}
\end{equation}
To ease the notation, we define $C_{21}:=C_{2,\Delta}/C_{1,\Delta}$, $C_{31}:=C_{3,\Delta}/C_{1,\Delta}$, and $T^d_k := \frac{b_k}{C_{1,\Delta}}$,
where $\{b_k\}_k$ is a sequence to be defined. These definitions allow us to rewrite \eqref{eq:EN_NEWW} as
\begin{equation}\label{eq:EN_NEWW_2}
a_k \leq f(a_{k-1},b_k,B_{\circ,k}),
\end{equation}
where
\begin{equation*}
\begin{split}
f(a_{k-1},b_k,B_{\circ,k}) &:= \exp(b_k) \Bigl[ a_{k-1} ( 1 +  C_{21} b_k ) \\
&+ b_k C_{31} \Bigl( C_b C_Z^3 C_{\circ,k}\bigl(\frac{b_k}{C_{1,\Delta}},B_{\circ,k}\bigr)^3 + \widetilde{K} C_Z C_{\circ,k}\bigl(\frac{b_k}{C_{1,\Delta}},B_{\circ,k}\bigr) \Bigr) \Bigr].
\end{split}
\end{equation*}
A direct inspection reveals that the map $B_{\circ,k} \mapsto f(a_{k-1},b_k,B_{\circ,k})$ is 
continuous and monotonically increasing.

Since we aim at constructing a convergent-to-zero sequence $\{T_k^d\}_k$ and
$B_{\circ,k} = O(T_k^d)$, we make the following ansatz: 
the sequence $\{ B_{\circ,k} \}_k$ is bounded by a positive global constant $B$. 
Since $f$ is monotonically increasing in $B_{\circ,k}$, 
the boundedness of $\{ B_{\circ,k} \}_k$ implies that
$f(a_{k-1},b_k,B_{\circ,k}) \leq f(a_{k-1},b_k,B)$.

Let us consider the two sequences $\{a_k\}_k$ and $\{ b_k \}_k$.
Without loss of generality, we consider the case in which $\{a_k\}_k$ is possibly growing and
$\{b_k\}_k$ converges to zero. Our goal is to construct a sequence $\{b_k\}_k$ that converges to
zero ``slowly enough'', while $\{a_k\}_k$ grows ``slowly enough''. This is necessary to satisfy \eqref{eq:cond_T_1},
where the left-hand converges to zero, while the right-hand side could grow as $k\rightarrow \infty$.
We then assume that $a_k > 1$ and $b_k < 1$ for $k$ large enough.

A Taylor expansion of $b \mapsto f(a,b,B)$ in $b=0$ reveals that
\begin{equation*}
f(a,b,B) = a + b(\widehat{A}+\widehat{B}a+\widehat{C}a^2+\widehat{D}a^3) + O(b^2a^3),
\end{equation*}
for some positive constants $\widehat{A},\widehat{B},\widehat{C},\widehat{D}$.
Therefore, \eqref{eq:EN_NEWW_2} becomes
\begin{equation}\label{eq:EN_NEWW_3}
a_k \leq a_{k-1} + b_k(\widehat{A}+\widehat{B}a_{k-1}+\widehat{C}a_{k-1}^2+\widehat{D}a_{k-1}^3) + O(b_k^2 a_{k-1}^3).
\end{equation}
Now, we choose $b_k = \frac{1}{k a_{k-1}^3}$, which clearly satisfies $b_k<1$ and $b_k \rightarrow 0$,
and obtain
\begin{equation}\label{eq:EN_NEWW_4}
\begin{split}
a_k &\leq a_{k-1} \Bigl( 1 + \frac{\widehat{A}}{k a_{k-1}^4}+\frac{\widehat{B}}{k a_{k-1}^3}+\frac{\widehat{C}}{k a_{k-1}^2}+\frac{\widehat{D}}{k a_{k-1}} \Bigr) + O\bigl(\frac{1}{k^2 a_{k-1}^3}\bigr) \\
&\leq a_{k-1} \Bigl( 1 + \frac{\widehat{E}}{k} \Bigr) \leq a_0 \Bigl( 1 + \frac{\widehat{E}}{k} \Bigr)^k \leq a_0 \exp( \widehat{E} ),
\end{split}
\end{equation}
for some positive constant $\widehat{E}$, where we used that $a_{k-1} \geq 1$.
The estimate \eqref{eq:EN_NEWW_4} shows that the sequence $\{a_k\}_k$ is bounded if we choose $\{b_k\}_k$ as $b_k = \frac{1}{k a_{k-1}^3}$.
This means that the sequence $a_k = \| \widetilde{\Psi}_k(T^d_k) \|_Z^2$ is bounded if one chooses $T^d_k = \frac{b_k}{C_{1,\Delta}} = \frac{1}{k C_{1,\Delta} \| \widetilde{\Psi}_{k-1}(T^d_{k-1}) \|_Z^6 }$.
Therefore, $b_k \rightarrow 0$ and $T^d_k\rightarrow 0$ as $k \rightarrow \infty$, and $\sum_{k=1}^{\infty} T^d_k = \infty$.
We found a suitable candidate $\{T^d_k\}_k$ for our purposes.

It remains to show the existence of a sequence $\{ B _{\circ,k} \}_k$ that is
bounded (according to our ansatz) and that satisfies \eqref{eq:cond_T_1} and \eqref{eq:cond_T_2}.
Let us look at \eqref{eq:cond_T_1}. Since the sequence $\{ \| \widetilde{\Psi}_k(T^d_k) \|_Z^2 \}_k$ is bounded, the denominator of the right-hand side of \eqref{eq:cond_T_1} is bounded.
Therefore, it is possible to find a $B_{\circ,k} = O(T^d_k)$ such that \eqref{eq:cond_T_1} is satisfied for any $k$.

Let us consider \eqref{eq:cond_T_2}. A direct calculation allows us to obtain that
\begin{equation*}
g_k(T^d_k) \leq \widehat{g}(T^d_k),
\end{equation*}
where
\begin{equation*}
\begin{split}
\widehat{g}(T^d_k) &= C_Z \Bigl\{ \widetilde{K} + 2 C_c \exp( C_{1,\Delta} T^d_k ) \times \\
&\times \bigl[ B + a_0 \exp( \widehat{E} ) (1+C_{2,\Delta} T^d_k) \bigr]\Bigr\}
\sqrt{\exp( C_{1,\Delta} T^d_k ) T^d_k C_{3,\Delta}},
\end{split}
\end{equation*}
which is continuous, monotonically increasing in $T^d_k$ and $\widehat{g}(0)=0$.
Therefore, it is sufficient to redefine $T^d_k$ by dividing it by an appropriate positive constant to obtain that \eqref{eq:cond_T_2} holds.

We have created a sequence $\{T^d_k\}_k$ whose elements satisfy the conditions of Lemmas \ref{lemma:1} and \ref{lemma:contraction} and such that
the corresponding series diverges. Therefore, there exists a finite positive integer $M$ such that $\sum_{k=1}^M T^d_k = T$.

\underline{Step 4}: Uniqueness and regularity of the constructed solution.\\
Uniqueness of such solution follows by the same arguments used in Theorem~\ref{thm:ex_un_sol},
but with \eqref{eq:EN_1} replaced by a similar energy estimate
obtained using Gr\"onwall's inequality
and the Lipschitz continuity conditions \eqref{eq:Lipschitz1} and \eqref{eq:Lipschitz2}.

The regularity of $\Psi$ follows from the fact that, by Theorem~\ref{thm:ex_un_sol} and
\eqref{eq:SSSS}, on each subinterval of the finite covering of $[0,T]$ the solution
$\Psi$ satisfies energy estimates similar to the ones of Theorem~\ref{thm:energy_aux}.
\end{proof}

\section{Less regular initial condition and potentials}\label{sec:less_regular}
In the previous sections, we assumed {\rm (A2)}, {\rm (A4)} and {\rm (A5)}, which essentially 
require the potentials $V_0,V_u,V_{xc}$ and the initial condition function $\Psi_0$ to be
twice weakly differentiable.
Is it possible to exploit our results to obtain
existence and uniqueness of a solution also in case of less regular potentials?

If the initial condition function $\Psi_0$ is less regular, then
it is natural to expect less regularity of the corresponding solution. 
In general, a Schr\"odinger equation ``transports'' in time the regularity
of the initial condition. Some maximal regularity results are proved in \cite{cazenave2003semilinear},
where the unique solution has the same space regularity
of the initial condition. An extreme case is also shown in \cite{Rugg2015}, where the authors prove
that, for a simple Schr\"odinger equation defined on a bounded one-dimensional space domain, an initial
condition in $L^2$ (outside the domain of the Hamiltonian) leads to a solution (so-called `mild solution') that is 
continuous but nowhere differentiable in time and continuous but nowhere differentiable in space
for almost all times. For these reasons, in this section we prove existence
and regularity results in case of a less regular initial condition $\Psi_0 \in H^1_0(\Omega;\C)$.

The proofs of the results presented in the previous sections
strongly rely on the regularity assumption on the potentials $V_0$, $V_u$ and $V_{xc}$
being twice weakly differentiable. 
In this section, we prove existence and regularity results for our problem \eqref{eq:TDKS_weak}
in the case when weakly differentiable potentials $W := W_0 + W_u w$ and $W_{xc}$ are added to the
right-hand side of our equation, i.e.
\begin{equation}\label{eq:TDKS_weak_less_reg}
i (\partial_t \Psi,\Phi) = (\nabla \Psi, \nabla \Phi) + ((V+W)\Psi + W_{xc}(\Psi),\Phi) + (F(\Psi),\Phi) \text{ a.e. in $(0,T)$},
\end{equation}
for all $\Phi \in H^1_0(\Omega;\C)$.

More precisely, we consider the following new assumptions.
\begin{itemize}
\item[{\rm (A2b)}]\label{ass:A2b} $W_0,W_u \in W^{1,\infty}(\Omega;\R)$.
\item[{\rm (A3b)}]\label{ass:A3b} $w \in L^{\infty}(0,T;\R)$.
\item[{\rm (A4b)}]\label{ass:A4b} $\Psi_0 \in H^1_0(\Omega;\C)$.
\item[{\rm (A5b)}]\label{ass:A5b} For any $\Phi \in H^1_0(\Omega;\C)$ it
holds that $W_{xc}(\Phi)\Phi \in H^1_0(\Omega;\C)$ and there exist positive constants $K_1$ and $K_2$
such that
\begin{equation}\label{eq:Lipschitz2_new}
\| W_{xc}(\Phi)\Phi - W_{xc}(\Lambda)\Lambda \| \leq K_1 \| \Phi - \Lambda \|
\end{equation}
\begin{equation}\label{eq:Lipschitz3_new}
\| W_{xc}(\Phi)\Phi - W_{xc}(\Lambda)\Lambda \|_{H^1} \leq K_2 \| \Phi - \Lambda \|_{H^1}
\end{equation}
for any $\Phi,\Lambda \in H^1_0(\Omega;\C)$.
\end{itemize} 
Because of the above discussion, we expect existence of a solution which is 
a function in $H^1_0(\Omega;\C)$ almost everywhere in $(0,T)$.
To show this, we will need the following technical lemma.

\begin{lemma}\label{lemma:Hartree}
There exists a positive constant $\widehat{C}_H$ such that
\begin{equation}\label{eq:Hartree_Lipschitz}
\| \nabla ( V_H(\Phi) \Phi ) -  \nabla ( V_H(\Lambda) \Lambda ) \| 
\leq \widehat{C}_H \bigl( \| \Phi \|_{H^1}^2 + \| \Lambda \|_{H^1}^2 \bigr) \| \Phi - \Lambda \|_{H^1}
\end{equation}
for any $\Phi,\Lambda \in H^1(\Omega;\C)$.
\end{lemma}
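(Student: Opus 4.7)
I plan to prove the lemma by decomposing the difference via the product rule and estimating each resulting term in $L^2$ through $L^\infty \times L^2$ bounds. Specifically, writing
\begin{equation*}
\nabla(V_H(\Phi)\Phi) - \nabla(V_H(\Lambda)\Lambda) = \nabla[(V_H(\Phi) - V_H(\Lambda))\Phi] + \nabla[V_H(\Lambda)(\Phi - \Lambda)]
\end{equation*}
and then applying Leibniz to each bracket, I obtain four terms:
\begin{enumerate}
\item[(i)] $V_H(\Lambda)\,\nabla(\Phi - \Lambda)$,
\item[(ii)] $(V_H(\Phi) - V_H(\Lambda))\,\nabla\Phi$,
\item[(iii)] $\nabla V_H(\Lambda)\,(\Phi - \Lambda)$,
\item[(iv)] $(\nabla V_H(\Phi) - \nabla V_H(\Lambda))\,\Phi$.
\end{enumerate}
For each term I would use the $L^\infty$ bound on the Hartree factor times the $L^2$ bound on the other.

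The key analytical tools are: (a) the boundedness of the extension operator $\mathcal{E}:H^1(\Omega;\C)\to H^1(\R^3;\C)$ from {\rm (B4)}; (b) the elementary estimate $\int_{\R^3} |u(y)|^2/|x-y|\,dy \leq C\|u\|_{H^1(\R^3)}^2$, obtained by splitting the integral into the region $|x-y|\leq 1$ (handled via Hölder and Sobolev embedding $H^1\hookrightarrow L^6$) and $|x-y|>1$ (controlled by $\|u\|_{L^2}^2$); (c) Hardy's inequality on $\R^3$, $\int_{\R^3} |u(y)|^2/|x-y|^2\,dy \leq C\|\nabla u\|_{L^2(\R^3)}^2$. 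Applying (b) and the polarization identity $|a|^2-|b|^2 = (a-b)\bar a + b(\bar a - \bar b)$ to $V_H(\Phi) - V_H(\Lambda) = |\cdot|^{-1}*(|\mathcal{E}\Phi|^2-|\mathcal{E}\Lambda|^2)$, combined with Cauchy--Schwarz in the convolution, yields
\begin{equation*}
\|V_H(\Lambda)\|_{L^\infty} \leq C\|\Lambda\|_{H^1}^2, \qquad \|V_H(\Phi) - V_H(\Lambda)\|_{L^\infty} \leq C(\|\Phi\|_{H^1} + \|\Lambda\|_{H^1})\|\Phi - \Lambda\|_{H^1}.
\end{equation*}
These immediately bound terms (i) and (ii) by the desired right-hand side (after combining norms via $ab \leq \frac{1}{2}(a^2+b^2)$).

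The main obstacle is terms (iii) and (iv), which involve $\nabla V_H$. Here I would transfer the gradient from the Newton kernel onto the density and write $\nabla V_H(\Phi) = |\cdot|^{-1}*\nabla|\mathcal{E}\Phi|^2 = 2\,\Rea\bigl(|\cdot|^{-1}*(\overline{\mathcal{E}\Phi}\,\nabla\mathcal{E}\Phi)\bigr)$. Then Cauchy--Schwarz in the convolution followed by Hardy's inequality yields
\begin{equation*}
|\nabla V_H(\Lambda)(x)| \leq C\Bigl(\int_{\R^3}\frac{|\mathcal{E}\Lambda|^2}{|x-y|^2}\,dy\Bigr)^{1/2}\|\nabla\mathcal{E}\Lambda\|_{L^2} \leq C'\|\Lambda\|_{H^1}^2,
\end{equation*}
and analogously, after using the splitting $\overline{\mathcal{E}\Phi}\,\nabla\mathcal{E}\Phi - \overline{\mathcal{E}\Lambda}\,\nabla\mathcal{E}\Lambda = \overline{\mathcal{E}(\Phi-\Lambda)}\,\nabla\mathcal{E}\Phi + \overline{\mathcal{E}\Lambda}\,\nabla\mathcal{E}(\Phi-\Lambda)$, one obtains
\begin{equation*}
\|\nabla V_H(\Phi) - \nabla V_H(\Lambda)\|_{L^\infty} \leq C(\|\Phi\|_{H^1} + \|\Lambda\|_{H^1})\|\Phi - \Lambda\|_{H^1}.
\end{equation*}

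Putting all four bounds together, each term is majorized by $C(\|\Phi\|_{H^1}^2 + \|\Lambda\|_{H^1}^2)\|\Phi - \Lambda\|_{H^1}$, and the constants combine into a single $\widehat{C}_H$. The main difficulty is the proper use of Hardy's inequality to handle the singular $|x-y|^{-2}$ behavior that appears after differentiating the Newton kernel, since the naive bound $\int |u|^2/|x-y|\,dy \leq C\|u\|_{H^1}^2$ used for $V_H$ itself is not enough for $\nabla V_H$; only Hardy in conjunction with Cauchy--Schwarz gives the uniform bound.
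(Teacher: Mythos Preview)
Your proposal is correct and follows essentially the same route as the paper: the paper also splits $V_H(\Phi)\Phi - V_H(\Lambda)\Lambda$ into two pieces (with the roles of $\Phi$ and $\Lambda$ swapped relative to yours), transfers the gradient from the Newton kernel onto the density via the identity $\nabla[((ab)\star |x|^{-1})c] = ((ab)\star |x|^{-1})\nabla c + (((\nabla a)b)\star |x|^{-1})c + ((a\nabla b)\star |x|^{-1})c$, and then bounds each convolution pointwise by Cauchy--Schwarz followed by Hardy's inequality. The only cosmetic difference is that the paper uses Hardy uniformly for the $L^\infty$ bound on $V_H$ itself (via $|(a,b/|x-\cdot|)| \leq C_H\|a\|\,\|\nabla b\|$), whereas you optionally invoke the splitting $|x-y|\lessgtr 1$ with Sobolev embedding for that step; both yield the same $H^1$-control.
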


\begin{proof}
We prove the statement for any $\Phi,\Lambda \in H^1(\R^3;\C)$. Once this is achieved, the estimate
\eqref{eq:Hartree_Lipschitz} follows easily by considering $\mathcal{E}(\Phi)$ and $\mathcal{E}(\Lambda)$ for any $\Phi,\Lambda \in H^1(\Omega;\C)$
and using the continuity of the extension operator $\mathcal{E}$.
In this proof, $(\cdot,\cdot)$ denotes the inner product for $L^2(\R^3;\C)$ and $\| \cdot \|$ the corresponding
norm. Similarly, $\| \cdot \|_{H^1}$ and $\| \cdot \|_{L^\infty}$ denote the usual norms for 
$H^1(\R^3;\C)$ and $L^\infty(\R^3;\C)$. 

Let $z(x):=\frac{1}{|x|}$. For any three functions $a,b,c \in H^1(\R^3)$, we write
\begin{equation}\label{eq:formula_abc}
\nabla[ ((ab) \star z) c ] = ((ab)\star z) \nabla c + (((\nabla a) b) \star z)c + ((a \nabla b) \star z)c.
\end{equation}
Using Cauchy-Schwarz and Hardy's inequalities
(see, e.g., \cite[Lemma 4.1]{yserentant2010regularity}
and \cite{CancesLeBris1999}), we obtain
\begin{equation}\label{eq:formula_abc_2}
\begin{split}
| ((ab)\star z)(x) | 
&= \Bigl| ( a , \frac{b}{|x - \cdot|} ) \Bigr| \leq \| a \| \Bigl\| \frac{b}{|x - \cdot|} \Bigr\|
\leq C_H  \| a \| \| \nabla b \|, \\
| (((\nabla a) b)\star z)(x) | 
&= \Bigl| ( \nabla a , \frac{b}{|x - \cdot|} ) \Bigr| \leq \| \nabla a \| \Bigl\| \frac{b}{|x - \cdot|} \Bigr\|
\leq C_H  \| \nabla a \| \| \nabla b \|, \\
| ((a \nabla b)\star z)(x) | &= \Bigl| ( \nabla b , \frac{a}{|x - \cdot|} ) \Bigr| 
\leq \| \nabla b \| \Bigl\| \frac{a}{|x - \cdot|} \Bigr\|
\leq C_H  \| \nabla a \| \| \nabla b \|, \\
\end{split}
\end{equation}
for almost all $x \in \R^3$.

The triangle inequality yields the estimate
\begin{equation}\label{eq:0000}
\begin{split}
\| \nabla (V_H(\Phi) \Phi) - \nabla (V_H(\Lambda) \Lambda) \|
\leq \| \nabla (V_H(\Phi)(\Phi-\Lambda)) \| + \| \nabla ((V_H(\Phi)-V_H(\Lambda)) \Lambda) \|,
\end{split}
\end{equation}
where the two terms on the right-hand side must be suitably bounded.

Consider the term $\| \nabla (V_H(\Phi)(\Phi-\Lambda)) \|$.
The relations \eqref{eq:formula_abc} and \eqref{eq:formula_abc_2} allow us to compute
\begin{equation}\label{eq:1111}
\begin{split}
\| \nabla (V_H(\Phi)(\Phi-\Lambda)) \| &= \| \nabla ((\Phi \overline{\Phi}) \star z) (\Phi-\Lambda)) \| \\
&\leq \| \nabla ((\Phi \overline{\Phi}) \star z) \|_{L^{\infty}} \| \Phi-\Lambda \|
+ \| (\Phi \overline{\Phi}) \star z \|_{L^{\infty}} \| \nabla \Phi- \nabla \Lambda \| \\
&\leq C_I(C_H) \| \Phi \|_{H^1}^2 \| \Phi-\Lambda \|_{H^1},
\end{split}
\end{equation}
for some constant $C_I$ depending on the Hardy's inequality constant $C_H$.

Consider the term $\| \nabla ((V_H(\Phi)-V_H(\Lambda)) \Lambda) \|$.
Notice that $V_H(\Phi)-V_H(\Lambda)$ can be written as 
$$V_H(\Phi)-V_H(\Lambda) = (\Phi \overline{(\Phi-\Lambda)}) \star z + (\overline{\Lambda}(\Phi-\Lambda)) \star z,$$
which then implies that
\begin{equation*}
\| \nabla ((V_H(\Phi)-V_H(\Lambda)) \Lambda) \| 
\leq 
\| \nabla \Bigl[ ((\Phi \overline{(\Phi-\Lambda)}) \star z) \Lambda \Bigr] \| 
+ \| \nabla \Bigl[ ((\overline{\Lambda}(\Phi-\Lambda)) \star z)  \Lambda \Bigr] \|.
\end{equation*}
Using again \eqref{eq:formula_abc} and \eqref{eq:formula_abc_2}, a direct calculation 
(similar to \eqref{eq:1111}) allows us to obtain
\begin{equation}\label{eq:2222}
\| \nabla ((V_H(\Phi)-V_H(\Lambda)) \Lambda) \| 
\leq
C_{II}(C_H) ( \| \Phi \|_{H^1}^2 + \| \Lambda \|_{H^1}^2 ) \| \Phi-\Lambda \|_{H^1},
\end{equation}
where $C_{II}$ is a positive constant depending on $C_H$.
The result \eqref{eq:Hartree_Lipschitz} follows by \eqref{eq:0000}, \eqref{eq:1111}, \eqref{eq:2222},
and setting $\widehat{C}_H := \max\{ C_I(C_H), C_{II}(C_H)\}$.
\end{proof}

We exploit the results obtained in Sections \ref{sec:auxiliary} and \ref{sec:full_NL}
to prove existence and uniqueness of a solution to \eqref{eq:TDKS_weak_less_reg}. 
Let us introduce the following smooth approximations; see, e.g., \cite{lieb2001analysis,Jerome2019}.
Suppose that a non-negative smooth function $\phi$ with compact support ($\phi \in C^\infty_c(\R^3)$) and such that $\int_{\R^3} \phi = 1$ is given. Define
$$\phi_\epsilon(x) := \epsilon^{-3} \phi(x/\epsilon),$$
so that $\int_{\R^3} \phi_\epsilon = 1$ and $\| \phi_\epsilon \|_{L^1} = \| \phi \|_{L^1}$; see, e.g., \cite{lieb2001analysis}.
Consider the smoothed initial condition
\begin{equation*}
\Psi_{0,\epsilon} = \phi_\epsilon \star \mathcal{E} (\Psi_0)
\end{equation*}
and the smoothed potentials
\begin{equation*}
W_{\epsilon}(\Phi) = \phi_\epsilon \star \mathcal{E}(W \Phi)
\quad \text{ and} \quad
W_{xc,\epsilon}(\Phi) = \phi_\epsilon \star \mathcal{E}(W_{xc}(\Phi)\Phi),
\end{equation*}
for any $\Phi \in Z$, where $\mathcal{E}$ is the extension operator introduced in Section \ref{sec:formulation}.
Clearly, the above convolutions are restricted on $\Omega$
and classical results \cite[Section 2.16]{lieb2001analysis} guarantee that the corresponding approximation functions are smooth,
that is $\Psi_{0,\epsilon}$, $W_{\epsilon}(\Phi)$ and $W_{xc,\epsilon}(\Phi)$ are in $C^\infty(\Omega;\C)$ with
\begin{equation*}
\lim_{\epsilon \rightarrow 0} \Psi_{\epsilon,0} = \Psi_0,
\quad
\lim_{\epsilon \rightarrow 0} W_{\epsilon}(\Phi) = W(\Phi)
\quad \text{and} \quad 
\lim_{\epsilon \rightarrow 0} W_{xc,\epsilon}(\Phi) = W_{xc}(\Phi)\Phi
\end{equation*}
in $L^2(\Omega)$. We define
$V_{\epsilon}(\Psi) := V \Psi + W_{\epsilon}(\Psi) + W_{xc,\epsilon}(\Psi)$
and consider the problem
\begin{equation}\label{eq:TDKS_weak_epsilon}
i (\partial_t \Psi,\Phi) = (\nabla \Psi, \nabla \Phi) + (V_{\epsilon}(\Psi),\Phi) + (F(\Psi),\Phi) \text{ a.e. in $(0,T)$}
\end{equation}
for all $\Phi \in H^1_0(\Omega;\C)$ and $\Psi(0)=\Psi_{0,\epsilon}$.

To use the results of Sections \ref{sec:auxiliary} and \ref{sec:full_NL},
we study the auxiliary Galerkin problem
\begin{equation}\label{eq:TDKS_weak_epsilon_m}
i (\partial_t \Psi^a_m,\Phi) = (\nabla \Psi^a_m, \nabla \Phi) + (V_{\epsilon}(\Psi^a_m),\Phi) + (G,\Phi) \text{ a.e. in $(0,T)$}
\end{equation}
for all $\Phi \in W_m$, which is exactly \eqref{eq:TDKS_weak_aux_m} with $V \Psi$ replaced by $V_{\epsilon}(\Psi)$
and $\Psi_0$ replaced by $\Psi_{0,\epsilon}$. 
Notice that the smoothness of the regularized potential $V_\epsilon$ guarantees that \eqref{eq:TDKS_weak_epsilon_m}
is uniquely solvable by a solution having the same regularity of the solution to \eqref{eq:TDKS_weak_aux_m}.

The first key step is to prove energy estimates for the solution to \eqref{eq:TDKS_weak_epsilon_m}
and study the dependence of the obtained bounds on the regularization parameter $\epsilon$.
For this purpose, we begin with some preliminary lemmas.

\begin{lemma}\label{lemma:reg_in_cond}
Consider the initial condition function $\Psi_0$ and its regularization $\Psi_{0,\epsilon}$.
There exist positive constants $C_0$, $C_1$ and $C_2$ (independent of $\epsilon$)
such that for any $\epsilon >0$ it holds that
\begin{equation}\label{eq:est_in_cond}
\begin{split}
\| \Psi_{0,\epsilon} \| &\leq C_0 \| \phi_1 \|_{L^1(\R^3)} \| \Psi_0 \|, \\
\| \nabla \Psi_{0,\epsilon} \| &\leq C_1 \| \phi_1 \|_{L^1(\R^3)} \| \Psi_0 \|_{H^1}, \\
\| \Delta \Psi_{0,\epsilon} \| &\leq C_2 \| \nabla \phi_\epsilon \|_{L^1(\R^3)} \| \Psi_0 \|_{H^1},
\end{split}
\end{equation}
where $\phi_1 = \phi_{\epsilon=1}$.
\end{lemma}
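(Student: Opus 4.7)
The strategy is a routine application of Young's inequality for convolutions, combined with the continuity of the Calder\'on extension operator $\mathcal{E}$ from {\rm (B4)}. All three bounds follow the same template: move derivatives onto one of the two factors of the convolution, apply Young's inequality $\|f \star g\|_{L^2(\R^3)} \leq \|f\|_{L^1(\R^3)} \|g\|_{L^2(\R^3)}$, then use $\|\mathcal{E}(\Psi_0)\|_{H^1(\R^3)} \leq C_{\mathcal{E}} \|\Psi_0\|_{H^1(\Omega)}$ from {\rm (B4)} to pass back to $\Omega$.

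For the first estimate, I would simply apply Young's inequality directly to $\Psi_{0,\epsilon} = \phi_\epsilon \star \mathcal{E}(\Psi_0)$, observing the scaling identity
\begin{equation*}
\|\phi_\epsilon\|_{L^1(\R^3)} = \int_{\R^3} \epsilon^{-3} |\phi(x/\epsilon)|\, dx = \|\phi\|_{L^1(\R^3)} = \|\phi_1\|_{L^1(\R^3)},
\end{equation*}
which is the source of the $\epsilon$-independence of $C_0$. For the second estimate, I would use that convolution with the smooth, compactly supported $\phi_\epsilon$ commutes with weak differentiation in the form $\nabla \Psi_{0,\epsilon} = \phi_\epsilon \star \nabla \mathcal{E}(\Psi_0)$, and then repeat the same Young/extension argument, setting $C_1 := C_{\mathcal{E}}$.

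The third estimate is the only one where the differentiation must be split across both factors: since $\mathcal{E}(\Psi_0)$ is only in $H^1(\R^3)$ (not $H^2$), one cannot put $\Delta$ on $\mathcal{E}(\Psi_0)$. The key distributional identity, valid because $\phi_\epsilon \in C_c^\infty(\R^3)$, is
\begin{equation*}
\Delta(\phi_\epsilon \star \mathcal{E}(\Psi_0)) = \sum_{i=1}^3 (\partial_i \phi_\epsilon) \star (\partial_i \mathcal{E}(\Psi_0)),
\end{equation*}
after which Young's inequality applied componentwise and the Cauchy--Schwarz bound $\sum_i \|\partial_i \phi_\epsilon\|_{L^1} \|\partial_i \mathcal{E}(\Psi_0)\| \leq \|\nabla \phi_\epsilon\|_{L^1} \|\nabla \mathcal{E}(\Psi_0)\|$ (with a dimensional constant) yield the claim with $C_2$ proportional to $C_{\mathcal{E}}$.

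I expect no real obstacle: the mildest subtlety is verifying that the derivative can indeed be transferred from the distribution $\mathcal{E}(\Psi_0)$ onto the smooth mollifier $\phi_\epsilon$ in the $\Delta$-estimate, which is standard since $\phi_\epsilon \in C_c^\infty(\R^3)$ and convolution with such a function maps $H^{-1}$ into $C^\infty$. The explicit appearance of $\|\nabla \phi_\epsilon\|_{L^1(\R^3)}$ on the right-hand side of the third bound (rather than $\|\phi_1\|_{L^1(\R^3)}$) is exactly what records the loss of one derivative we paid for by moving $\partial_i$ onto the mollifier; in later arguments this factor will blow up like $\epsilon^{-1}$, but that is intentional and must be tracked downstream.
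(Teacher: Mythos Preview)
Your proposal is correct and follows essentially the same route as the paper: Young's inequality plus the continuity of $\mathcal{E}$ for the first two bounds, and for the $\Delta$-estimate the same split $\Delta(\phi_\epsilon \star \mathcal{E}(\Psi_0)) = \sum_i (\partial_i \phi_\epsilon) \star (\partial_i \mathcal{E}(\Psi_0))$ to avoid a second derivative on $\mathcal{E}(\Psi_0)$. Your exposition is in fact slightly more explicit about the scaling identity $\|\phi_\epsilon\|_{L^1}=\|\phi_1\|_{L^1}$ and the componentwise Cauchy--Schwarz step, but the argument is the same.
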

\begin{proof}
The first estimate follows by a standard result
for approximation by convolution (see, e.g., \cite[Section 2.16]{lieb2001analysis}) and by the continuity of the extension operator.

The second estimate is obtained by exploiting the formula for the differentiation of convolution
and Young's inequality.
Indeed, we have $\nabla \Psi_{0,\epsilon} = \nabla( \phi_\epsilon \star \mathcal{E}(\Psi_0) )
= \phi_\epsilon \star \nabla \mathcal{E}(\Psi_0)$ and hence
\begin{equation*}
\begin{split}
\| \nabla \Psi_{0,\epsilon} \| &= \| \phi_\epsilon \star \nabla \mathcal{E}(\Psi_0) \| 
\leq \| \phi_1 \|_{L^1(\R^3)} \| \nabla \mathcal{E}(\Psi_0) \|_{L^2(\R^3)} \\
&\leq \| \phi_1 \|_{L^1(\R^3)} \| \mathcal{E}(\Psi_0) \|_{H^1(\R^3)}
\leq C_{\mathcal{E}} \| \phi_1 \|_{L^1(\R^3)} \| \Psi_0 \|_{H^1},
\end{split}
\end{equation*}
where $C_{\mathcal{E}}$ is the continuity constant of the operator $\mathcal{E}$;
see {\rm (B4)}.

To obtain the third inequality, we do not differentiate twice $\Psi_0$ (since it is only 
in $H^1_0(\Omega;\C)$),
but consider the distribution induced by $\nabla \mathcal{E}(\Psi_0)$ acting on the smooth
function $\phi_{\epsilon}$.
We then write
\begin{equation*}
\partial_{x_j} (\phi_\epsilon \star \partial_{x_j}  \mathcal{E}(\Psi_0))
= \int_{\R^3} \partial_{x_j} \phi_\epsilon(x-y) \partial_{x_j}  \mathcal{E}(\Psi_0)(y) \, dy,
\end{equation*}
for $j=1,2,3$,
which is the derivative of the distribution induced by $\nabla \mathcal{E}(\Psi_0)$ and applied to $\Phi_\epsilon$.
This formula allows us to compute
\begin{equation*}
\| \Delta \Psi_{0,\epsilon} \| = \| \nabla \cdot (\phi_\epsilon \star \nabla \mathcal{E}(\Psi_0)) \| 
\leq C_2 \| \nabla \phi_\epsilon \|_{L^1(\R^3)} \| \Psi_0 \|_{H^1},
\end{equation*}
for some positive constant $C_2$ that depends on $C_{\mathcal{E}}$.
\end{proof}

\begin{lemma}\label{lemma:reg_potentials}
Consider the regularized potential $V_{\epsilon}$. There exist three positive constants
$C_3$, $C_4$ and $C_5$ (independent of $\epsilon$) such that for any $\epsilon >0$ it holds that
\begin{equation}\label{eq:estimates_potentials}
\begin{split}
\| V_\epsilon(\Phi) \| & \leq C_3 \Bigl[ \| V \|_\infty + \| \phi_1 \|_{L^1(\R^3)} ( \| W \|_\infty + K_1 ) \Bigr] \| \Phi \|, \\
\| \nabla ( V_\epsilon(\Phi) ) \| &\leq C_4 \Bigl[ \| V \|_{1,\infty} 
+ \| \phi_1 \|_{L^1(\R^3)} ( \| W \|_{1,\infty} + K_2 ) \Bigr] \| \Phi\|_{H^1}, \\
\| \Delta ( V_\epsilon(\Phi) ) \| &\leq C_5 \Bigl[ \| V \|_{2,\infty} 
+ \| \nabla \phi_\epsilon \|_{L^1(\R^3)} ( \| W \|_{1,\infty} + K_2 ) \Bigr] \| \Phi \|_{H^2}, \\
\end{split}
\end{equation}
for any $\Phi \in Z$, where the norms $\| \cdot \|_{j,\infty}$ are the usual norms
for the spaces \linebreak $L^{\infty}(0,T;W^{j,\infty}(\Omega;\C))$ for $j=1,2$.
\end{lemma}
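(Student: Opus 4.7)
The plan is to bound each of the three summands of $V_\epsilon(\Phi) = V\Phi + W_\epsilon(\Phi) + W_{xc,\epsilon}(\Phi)$ separately and apply the triangle inequality. The term $V\Phi$ contains no convolution, so I would estimate it directly: $\|V\Phi\| \leq \|V\|_\infty \|\Phi\|$, $\|\nabla(V\Phi)\| \leq \|V\|_{1,\infty}\|\Phi\|_{H^1}$, and, using the Leibniz rule $\Delta(V\Phi) = \Delta V \,\Phi + 2\nabla V\cdot \nabla \Phi + V\Delta \Phi$, the bound $\|\Delta(V\Phi)\| \leq C\|V\|_{2,\infty}\|\Phi\|_{H^2}$.

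For the two smoothed terms, the key tool is Young's convolution inequality together with the scaling $\|\phi_\epsilon\|_{L^1} = \|\phi_1\|_{L^1}$ and the continuity of the extension operator $\mathcal{E}$ from (B4). I would first check that the \emph{seed} functions $W\Phi$ and $W_{xc}(\Phi)\Phi$ lie in $H^1(\Omega;\C)$: for $W\Phi$ this is immediate from (A2b), while for $W_{xc}(\Phi)\Phi$ I would apply (A5b) with $\Lambda=0$ (noting $W_{xc}(0)\cdot 0 = 0$, which follows from $W_{xc}(0)\cdot 0 \in H^1_0$ by linearity) to obtain $\|W_{xc}(\Phi)\Phi\| \leq K_1 \|\Phi\|$ and $\|W_{xc}(\Phi)\Phi\|_{H^1} \leq K_2 \|\Phi\|_{H^1}$. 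Combining $\|\phi_\epsilon \star g\|_{L^2(\R^3)} \leq \|\phi_\epsilon\|_{L^1}\|g\|_{L^2(\R^3)}$ with these product estimates on $W\Phi$ (using $W\in W^{1,\infty}$) and with the bounds above for $W_{xc}(\Phi)\Phi$, I obtain the first two inequalities with constants proportional to $\|\phi_1\|_{L^1(\R^3)}$.

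The main obstacle is the third estimate, because we cannot differentiate $W\Phi$ or $W_{xc}(\Phi)\Phi$ twice: the potentials are only in $W^{1,\infty}$ and (A5b) gives control in $H^1$ only. The trick is to put one derivative on the smooth mollifier $\phi_\epsilon$ and the other on the seed function. Concretely, I would write $\partial_j\partial_j(\phi_\epsilon \star \mathcal{E}(g)) = (\partial_j \phi_\epsilon)\star \partial_j \mathcal{E}(g)$, which is valid since $\mathcal{E}(g) \in H^1(\R^3;\C)$ for $g = W\Phi$ or $g = W_{xc}(\Phi)\Phi$. Applying Young's inequality one more time yields $\|\Delta(\phi_\epsilon \star \mathcal{E}(g))\| \leq C\|\nabla \phi_\epsilon\|_{L^1(\R^3)}\|\nabla \mathcal{E}(g)\|_{L^2(\R^3)}$, and the continuity of $\mathcal{E}$ together with the $H^1$ product bounds then finishes the estimate; note this introduces the $\|\nabla \phi_\epsilon\|_{L^1}$ factor (which scales like $\epsilon^{-1}$) that makes the third bound genuinely $\epsilon$-dependent. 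Collecting all three pieces and naming the resulting constants $C_3,C_4,C_5$ yields \eqref{eq:estimates_potentials}.
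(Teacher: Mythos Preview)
Your proposal is correct and follows exactly the approach the paper intends: the paper's proof simply refers back to the arguments of Lemma~\ref{lemma:reg_in_cond} together with (A2b) and (A5b), and what you have written is precisely the spelled-out version of those arguments (direct product estimates for $V\Phi$, Young's inequality plus the scaling $\|\phi_\epsilon\|_{L^1}=\|\phi_1\|_{L^1}$ for the first two bounds on the mollified terms, and the trick of shifting one derivative onto $\phi_\epsilon$ for the Laplacian). The only cosmetic remark is that $W_{xc}(0)\cdot 0 = 0$ is immediate from the multiplication by zero, so no appeal to linearity or $H^1_0$ is needed there.
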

\begin{proof}
The three estimates are obtained following the same arguments used in Lemma \ref{lemma:reg_in_cond}
together with the new assumptions (A2b) and (A5b).
\end{proof}

With the estimates of Lemmas \ref{lemma:reg_in_cond} and \ref{lemma:reg_potentials} at hand,
we prove the following energy estimates.

\begin{theorem}[Energy estimates for the regularized problem]\label{thm:en_est_reg}
Let $G \in \Y$ and consider the solution $\Psi^a_m$ to \eqref{eq:TDKS_weak_epsilon_m}.
For almost all $t \in (0,T)$ there exist positive constants (independent of $\epsilon$) 
$\widehat{C}$, $\widehat{C}_{\nabla}$ and $K_{-1}$ and a constant $C_\epsilon>0$ (depending on $\epsilon$)
such that
\begin{equation}\label{eq:EN_1_eps}
\| \Psi^a_m(t) \|^2 \leq \exp(\widehat{C} T) \Bigl[ C_0^2 \| \phi_1 \|_{L^1(\R^3)}^2 \| \Psi_0 \|^2 + T \| G \|_{Y_{\infty,0}}^2 \Bigr],
\end{equation}
\begin{equation}\label{eq:EN_2_eps}
\| \nabla \Psi^a_m(t) \|^2 \leq \exp(\widehat{C}_\nabla T) \Bigl[ C_1^2 \| \phi_1 \|_{L^1(\R^3)}^2 \| \nabla \Psi_0 \|^2 + T \| G \|_{Y_{\infty,1}}^2 \Bigr],
\end{equation}
\begin{equation}\label{eq:EN_3_eps}
\| \Delta \Psi^a_m(t) \|^2 \leq \exp(C_\epsilon T) \Bigl[ C_2^2 \| \nabla \phi_\epsilon \|_{L^1(\R^3)}^2 \| \Psi_0 \|_{H^1}^2 + T \| G \|_{\Y}^2 \Bigr],
\end{equation}
\begin{equation}\label{eq:EN_4_eps}
\| \partial_t \Psi^a_m(t) \|_{H^{-1}} \leq K_{-1}, %(\| G \|_{Y_{\infty,1}},\| \Psi_0 \|_{H^1}),
\end{equation}
\begin{equation}\label{eq:EN_5_eps}
\| \partial_t \Psi^a_m(t) \| \leq \| \Delta \Psi^a_m(t) \| + \| V_\epsilon( \Psi^a_m(t),t) \| + \| G \|_{\Y},
\end{equation}
where $C_0$, $C_1$ and $C_2$ are given in Lemma \ref{lemma:reg_in_cond}.
Notice that $K_{-1}$ depends on $\| G \|_{Y_{\infty,1}}$ and $\| \Psi_0 \|_{H^1}$.
\end{theorem}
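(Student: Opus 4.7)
The strategy is to replay the proof of Theorem \ref{thm:energy_aux}, treating the regularized potential $V_\epsilon(\Psi^a_m)$ the way $V\Psi^a_m$ was treated there, but absorbing the nonlinear/nonlocal structure of $V_\epsilon$ through the a priori bounds in Lemma \ref{lemma:reg_potentials} and the regularized initial data through Lemma \ref{lemma:reg_in_cond}. The essential bookkeeping point is to verify that the constants in \eqref{eq:EN_1_eps}, \eqref{eq:EN_2_eps}, and \eqref{eq:EN_4_eps}--\eqref{eq:EN_5_eps} do not depend on $\epsilon$, while the one appearing in \eqref{eq:EN_3_eps} unavoidably does, through the factor $\|\nabla\phi_\epsilon\|_{L^1(\R^3)}$.

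For \eqref{eq:EN_1_eps}, I would test \eqref{eq:TDKS_weak_epsilon_m} with $\Phi=\Psi^a_m$ and extract the imaginary part. The $(\nabla\Psi^a_m,\nabla\Psi^a_m)$ term vanishes, and $(V_\epsilon(\Psi^a_m),\Psi^a_m)$ is controlled by Cauchy--Schwarz together with the first estimate of Lemma \ref{lemma:reg_potentials} (note that the imaginary part need not vanish, since $V_\epsilon$ is nonlocal, so we must use a full absolute bound). After Young's inequality the differential Gr\"onwall inequality yields \eqref{eq:EN_1_eps}, with the initial datum bounded via the first inequality in Lemma \ref{lemma:reg_in_cond} and $\widehat{C}$ depending only on $\|V\|_\infty$, $\|W\|_\infty$, $K_1$, and $\|\phi_1\|_{L^1(\R^3)}$. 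For \eqref{eq:EN_2_eps}, I would repeat the computation leading to \eqref{eq:proof_EN_2} of Theorem \ref{thm:energy_aux}: substitute the strong form $\partial_t\Psi^a_m = i\Delta\Psi^a_m - iV_\epsilon(\Psi^a_m) - iG$, integrate by parts once to transfer a gradient onto $V_\epsilon(\Psi^a_m)$, and bound $\|\nabla V_\epsilon(\Psi^a_m)\|$ by the second inequality of Lemma \ref{lemma:reg_potentials}; this is still $\epsilon$-independent. Combined with the $\epsilon$-independent $\|\nabla\Psi_{0,\epsilon}\|$ bound, Gr\"onwall gives \eqref{eq:EN_2_eps}.

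Estimate \eqref{eq:EN_3_eps} is obtained by the same manipulation as in \eqref{eq:proof_EN_3}--\eqref{eq:DELTA}: differentiate $\|\Delta\Psi^a_m(t)\|^2$, substitute the strong form, integrate by parts to transfer a further gradient, and use $\Psi^a_m(t), \Delta \Psi^a_m(t), G(t) \in H^1_0$ for the boundary terms. The new step is to estimate $\|\Delta V_\epsilon(\Psi^a_m)\|$ by the third inequality of Lemma \ref{lemma:reg_potentials}, which injects the factor $\|\nabla\phi_\epsilon\|_{L^1(\R^3)}$. Closing the loop via \eqref{eq:EN_2_eps} and Gr\"onwall produces the $\epsilon$-dependent constant $C_\epsilon$, and the initial-condition term $\|\Delta\Psi_{0,\epsilon}\|^2$ is bounded using the third inequality of Lemma \ref{lemma:reg_in_cond}.

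For \eqref{eq:EN_4_eps}, I would test \eqref{eq:TDKS_weak_epsilon_m} against an arbitrary $\Phi\in H^1_0(\Omega;\C)$ with $\|\Phi\|_{H^1}\leq 1$ and bound each term by Cauchy--Schwarz, using $\|V_\epsilon(\Psi^a_m)\|\leq C_3[\cdots]\|\Psi^a_m\|$ from Lemma \ref{lemma:reg_potentials}; combined with \eqref{eq:EN_1_eps} and \eqref{eq:EN_2_eps} this yields an $\epsilon$-independent $K_{-1}$ depending only on $\|G\|_{Y_{\infty,1}}$ and $\|\Psi_0\|_{H^1}$. Finally, \eqref{eq:EN_5_eps} is obtained exactly as \eqref{eq:EN_4} in Theorem \ref{thm:energy_aux} by testing with $\Phi=\partial_t\Psi^a_m$ and dividing by $\|\partial_t\Psi^a_m\|$. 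The main obstacle in the argument is the nonlocal, convolution-based structure of $W_\epsilon$ and $W_{xc,\epsilon}$: unlike in Theorem \ref{thm:energy_aux}, the quantity $(V_\epsilon(\Psi^a_m),\Psi^a_m)$ is not automatically real, so all bounds must go through $\|V_\epsilon(\Psi^a_m)\|$, and careful use of Lemma \ref{lemma:reg_potentials} is required to separate the $\epsilon$-independent part (involving $\|\phi_1\|_{L^1(\R^3)}$) from the $\epsilon$-dependent one (involving $\|\nabla\phi_\epsilon\|_{L^1(\R^3)}$), which is what allows only \eqref{eq:EN_3_eps} to carry an $\epsilon$-dependent constant.
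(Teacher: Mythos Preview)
Your proposal is correct and follows essentially the same approach as the paper: replay the proof of Theorem~\ref{thm:energy_aux}, replacing the pointwise multiplication $V\Psi^a_m$ by the nonlocal $V_\epsilon(\Psi^a_m)$ and invoking Lemmas~\ref{lemma:reg_in_cond} and~\ref{lemma:reg_potentials} to track which constants remain $\epsilon$-independent. The only cosmetic difference is in closing \eqref{eq:EN_3_eps}: the paper absorbs $\|\Psi^a_m\|_{H^2}$ directly into $\|\Delta\Psi^a_m\|$ via the norm-equivalence constant $C_Z$ from {\rm (B6)}, rather than feeding the lower-order pieces back through \eqref{eq:EN_2_eps} as you suggest, but both routes yield the same estimate.
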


\begin{proof}
The estimate \eqref{eq:EN_1_eps} can be obtained as for 
\eqref{eq:EN_1}, using Lemma \ref{lemma:reg_potentials} and the first of \eqref{eq:est_in_cond}.
In fact, by testing \eqref{eq:TDKS_weak_epsilon_m} with $\Phi = \Psi_m^a$ and taking the imaginary part, we obtain
\begin{equation*}
\begin{split}
\frac{1}{2} \frac{d}{dt} \| \Psi^a_m(t) \|^2 &= \Ima( G(t) , \Psi^a_m(t))  + \Ima( V_\epsilon(\Psi_m^a) , \Psi_m^a) \\
&\leq \frac{1}{2}\| G \|_{Y_{\infty,0}}^2 + \frac{1}{2} \| V_\epsilon(\Psi_m^a) \|^2 + \| \Psi^a_m(t) \|^2.
\end{split}
\end{equation*}
Multiplying both sides by $2$ and using Lemma \ref{lemma:reg_potentials} to estimate the term $\| V_\epsilon(\Psi_m^a) \|^2$, we get
\begin{equation*}
\frac{d}{dt} \| \Psi^a_m(t) \|^2
\leq \| G \|_{Y_{\infty,0}}^2 + \| V_\epsilon(\Psi_m^a) \|^2 + 2 \| \Psi^a_m(t) \|^2
\leq \| G \|_{Y_{\infty,0}}^2 + \widehat{C} \| \Psi^a_m(t) \|^2,
\end{equation*}
where $\widehat{C}=C_3^2 \bigl[ \| V \|_\infty + \| \phi_1 \|_{L^1(\R^3)} ( \| W \|_\infty + K_1 ) \bigr]^2 + 2$.
The estimate \eqref{eq:EN_1_eps} follows by the Gr\"onwall inequality and the first 
inequality of \eqref{eq:est_in_cond}.

Let us prove \eqref{eq:EN_2_eps}. Proceeding as in Theorem \ref{thm:energy_aux}, we use
\eqref{eq:proof_EN_2} and \eqref{eq:TDKS_weak_epsilon_m} in strong form to obtain
\begin{equation*}
\frac{d}{dt} \| \nabla \Psi^a_m(t) \|^2
= 2 \Rea \Bigl[ -i ( \nabla(V_\epsilon(\Psi^a_m(t),t)) , \nabla \Psi^a_m(t) ) - i ( \nabla G(t) , \nabla \Psi^a_m(t) )\Bigr],
\end{equation*}
and the results follows using Lemmas \ref{lemma:reg_in_cond} and \ref{lemma:reg_potentials},
Poincar\'e-Friedrichs' and Gr\"onwall's inequalities.

%Next, we prove \eqref{eq:EN_4_eps}. Consider any $\Phi \in H^1(\Omega)$ such that $\| \Phi \|_{H^1}=1$
%and the equation \eqref{eq:TDKS_weak_epsilon_m}.
%We then write
%\begin{equation*}
%\begin{split}
%|(\partial_t \Psi^a_m(t),\Phi)| &\leq |(\nabla \Psi^a_m(t), \nabla \Phi)| + |(V_{\epsilon}(\Psi^a_m(t),t),\Phi)| + |(G(t),\Phi)| \\
%&\leq \|\nabla \Psi^a_m(t) \| + \| V_{\epsilon}(\Psi^a_m(t),t) \| + \| G(t) \|,
%\end{split}
%\end{equation*}
%for almost all $t \in (0,T)$.
%Using the first of \eqref{eq:estimates_potentials}, \eqref{eq:EN_1_eps} and \eqref{eq:EN_2_eps},
%we obtain that there exists a constant $K_{-1}$ independent of $\epsilon$ (but depending on $G$) such that
%\begin{equation*}
%\| \partial_t \Psi^a_m(t) \|_{H^{-1}} = \sup_{\| \Phi \|_{H^1}=1} |(\partial_t \Psi^a_m(t),\Phi)|
%\leq K_{-1}(\| G \|_{Y_{\infty,1}}).
%\end{equation*}

Let us now prove \eqref{eq:EN_3_eps}. Recalling \eqref{eq:proof_EN_3}
and using \eqref{eq:TDKS_weak_epsilon_m} in strong form, we obtain (as in \eqref{eq:DELTA})
\begin{equation*}
\frac{d}{dt} \| \Delta \Psi^a_m(t) \|^2 
= 2 \Rea \Bigl[ -i ( \Delta(V_\epsilon( \Psi^a_m(t),t)) , \Delta \Psi^a_m(t) ) -i ( \Delta G(t) , \Delta \Psi^a_m(t) ) \Bigr],
\end{equation*}
for almost all $t$ in $(0,T)$.
We can now use \eqref{eq:estimates_potentials} to write
\begin{equation*}
\begin{split}
\frac{d}{dt} \| \Delta \Psi^a_m(t) \|^2 
&\leq C_5^2 \Bigl[ \| V \|_{2,\infty} 
+ \| \nabla \phi_\epsilon \|_{L^1(\R^3)} ( \| W \|_{1,\infty} + K_2 ) \Bigr]^2 \| \Psi^a_m(t) \|_{H^2}^2 \\
&+ \| \Delta G(t) \|^2 + 2 \| \Delta \Psi^a_m(t) \|^2 \\
&\leq \| G \|_{Y_{\infty,2}}^2 + C_\epsilon \| \Delta \Psi^a_m(t) \|^2, \\
\end{split}
\end{equation*}
where $C_\epsilon = 2 + C_5^2 \Bigl[ \| V \|_{2,\infty} 
+ \| \nabla \phi_\epsilon \|_{L^1(\R^3)} ( \| W \|_{1,\infty} + K_2 ) \Bigr]^2 C_Z^2$.
The estimate \eqref{eq:EN_3_eps} follows by Gr\"onwall's inequality and Lemma \ref{lemma:reg_in_cond}.

Finally, the bounds \eqref{eq:EN_4_eps} and \eqref{eq:EN_5_eps} are obtained as for \eqref{eq:EN_4_pre} and \eqref{eq:EN_4}, respectively,
and using Lemma \ref{lemma:reg_potentials}.
\end{proof}

Notice that the energy estimates of Theorem \ref{thm:en_est_reg} have two main properties.
On the one hand, the obtained bounds are independent of $m$ (exactly as in Theorem~\ref{thm:energy_aux}).
On the other hand, only the bounds in \eqref{eq:EN_1_eps}, \eqref{eq:EN_2_eps} and \eqref{eq:EN_4_eps}
are independent of $\epsilon$, while the others depend on the norm of $\nabla \phi_\epsilon$,
which behaves as $O(1/\epsilon^\alpha)$ for some positive $\alpha$.

Since the bounds of Theorem \ref{thm:en_est_reg} are independent of $m$, we can invoke
our results of Sections \ref{sec:auxiliary} and \ref{sec:full_NL} to conclude that for any
$\epsilon >0$ there exists a unique solution $\Psi_\epsilon$ to \eqref{eq:TDKS_weak_epsilon}.
This is formally stated in the following theorem.

\begin{theorem}[Existence and uniqueness of solution to the $\epsilon$-regularized problem]\label{thm:ex_un_eps}
For any $\epsilon >0$ there exists a unique solution $\Psi_\epsilon$ to \eqref{eq:TDKS_weak_epsilon}
such that
\begin{equation*}
\Psi_\epsilon \in \Y, 
\quad 
\partial_t \Psi_\epsilon \in L^{\infty}(0,T;L^2(\Omega;\C))
\quad \text{and} \quad 
\Psi_\epsilon \in C([0,T];H^1_0(\Omega;\C)).
\end{equation*}
Moreover, there exist three positive constants $C_\alpha$, $C_\beta$ and $C_\gamma$ (independent of $\epsilon$) 
such that for any $\epsilon >0$ the following bounds hold
\begin{equation}\label{eq:no_eps_bounds}
\| \Psi_\epsilon(t) \| \leq C_\alpha,
\quad
\| \nabla \Psi_\epsilon(t) \| \leq C_\beta,
\quad
\| \partial_t \Psi_\epsilon(t) \|_{H^{-1}} \leq C_\gamma,
\end{equation}
for almost all $t$ in $(0,T)$.
\end{theorem}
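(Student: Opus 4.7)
The plan is to separate the two claims. For the existence and uniqueness of $\Psi_\epsilon$ at fixed $\epsilon>0$, the observation is that the machinery developed in Sections~\ref{sec:auxiliary}--\ref{sec:full_NL} applies directly to \eqref{eq:TDKS_weak_epsilon}. Indeed, for every fixed $\epsilon>0$ the smoothed operators $W_\epsilon(\cdot)=\phi_\epsilon\star\mathcal{E}(W\,\cdot)$ and $W_{xc,\epsilon}(\cdot)=\phi_\epsilon\star\mathcal{E}(W_{xc}(\cdot)\,\cdot)$ inherit Lipschitz-type bounds in the $Z$-norm from (A5b) and the boundedness of $W$, with constants that may depend on $\epsilon$ via $\|\nabla\phi_\epsilon\|_{L^1}$ (harmless at fixed $\epsilon$). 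Combined with $\Psi_{0,\epsilon}\in Z$, granted by Lemma~\ref{lemma:reg_in_cond}, this verifies (A2), (A4), (A5) for the regularized problem (absorbing $V+W_\epsilon$ into the linear potential slot and $W_{xc,\epsilon}+F$ into the nonlinear slot), so Theorem~\ref{thm:existence_uniqueness} yields $\Psi_\epsilon$ with the stated regularity on $[0,T]$.

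For the $\epsilon$-uniform bounds \eqref{eq:no_eps_bounds}, I would proceed stage by stage. The $L^2$ bound $C_\alpha$ is obtained by testing the strong form of \eqref{eq:TDKS_weak_epsilon} with $\Psi_\epsilon$ and taking the imaginary part: since $V$, $V_H(\Psi_\epsilon)$ and $V_{xc}(\Psi_\epsilon)$ are real-valued, the inner products $(V\Psi_\epsilon,\Psi_\epsilon)$, $(V_H(\Psi_\epsilon)\Psi_\epsilon,\Psi_\epsilon)$ and $(V_{xc}(\Psi_\epsilon)\Psi_\epsilon,\Psi_\epsilon)$ drop out, leaving only the mollified perturbations, which are controlled by $C\|\Psi_\epsilon\|^2$ $\epsilon$-uniformly via the first line of \eqref{eq:estimates_potentials}; Gr\"onwall then closes. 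The $H^1$ bound $C_\beta$ comes from differentiating $\|\nabla\Psi_\epsilon\|^2$ in time as in \eqref{eq:proof_EN_2}, substituting the strong form, and handling the resulting terms with Lemma~\ref{lemma:reg_potentials} (second line, $\epsilon$-uniform) for the potential contribution and with Lemma~\ref{lemma:Hartree} together with \eqref{eq:Lipschitz3_new} (applied with $\Lambda=0$) for the nonlinear contribution, absorbing lower-order terms via the $L^2$ bound already in hand. Finally the $H^{-1}$ bound $C_\gamma$ is immediate from the weak form: for $\Phi\in H^1_0(\Omega;\C)$ with $\|\Phi\|_{H^1}\le 1$, the dual pairing $|\langle\partial_t\Psi_\epsilon,\Phi\rangle|$ is bounded by $\|\nabla\Psi_\epsilon\|+\|V_\epsilon(\Psi_\epsilon)\|+\|F(\Psi_\epsilon)\|$, each term $\epsilon$-uniformly controlled by the $L^2$ and $H^1$ estimates just derived together with the first line of \eqref{eq:estimates_potentials} and (B5).

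The main obstacle I foresee is the $H^1$ step. Because Lemma~\ref{lemma:Hartree} is cubic in $\|\Psi_\epsilon\|_{H^1}$, a na\"ive Gr\"onwall inequality for $\|\nabla\Psi_\epsilon\|^2$ produces a superlinear right-hand side that could in principle blow up in finite time. To close the estimate globally on $[0,T]$ it will likely be necessary to exploit the mass-conservation-type cancellation characteristic of Schr\"odinger nonlinearities of the form $g(|\Psi|^2)\Psi$: the apparently cubic contribution $\mathrm{Im}(\nabla(V_H(\Psi_\epsilon)\Psi_\epsilon),\nabla\Psi_\epsilon)$ should be rewritten, after integration by parts, as half the time derivative of the non-negative Hartree energy $(V_H(\Psi_\epsilon)\Psi_\epsilon,\Psi_\epsilon)$, which is uniformly bounded via Hardy's inequality (as used in \eqref{eq:formula_abc_2}) and the $L^2$ bound $C_\alpha$. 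After absorbing this term into an augmented energy functional, only at most quadratic perturbations remain and Gr\"onwall closes $\epsilon$-uniformly, yielding $C_\beta$.
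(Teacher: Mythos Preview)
Your existence--uniqueness argument is essentially the paper's: re-run Sections~\ref{sec:auxiliary}--\ref{sec:full_NL} with the $\epsilon$-dependent estimates of Theorem~\ref{thm:en_est_reg} in place of Theorem~\ref{thm:energy_aux}. One small inaccuracy: $W_\epsilon(\Psi)=\phi_\epsilon\star\mathcal{E}(W\Psi)$ is a convolution operator, not a multiplication operator, so it cannot literally be placed in the ``linear potential slot'' $V(x,t)\Psi$ of (A2); it must be absorbed, together with $W_{xc,\epsilon}$, into the Lipschitz nonlinearity (A5). This is harmless at fixed $\epsilon$.

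For the $\epsilon$-uniform bounds the paper takes a genuinely different route from your direct global energy estimate. Instead of trying to close a Gr\"onwall inequality on $\|\nabla\Psi_\epsilon\|^2$ over all of $[0,T]$ and invoking Hamiltonian cancellation for the cubic Hartree term, the paper works \emph{locally in time} and bootstraps off the fixed-point construction. Alongside the $H^2$-ball $S_{B_\circ}(T^{\star\star})$ it introduces an $H^1$-ball $\widehat{S}_{B_\circ}(T^{\star\star})$, defined via the $\epsilon$-independent estimate \eqref{eq:EN_2_eps}, and shrinks $T^{\star\star}$ until the intersection $\widehat{S}_{B_\circ}\cap S_{B_\circ}$ is invariant under $\mathcal{A}$. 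On this ball Lemma~\ref{lemma:Hartree} (and the assumptions on $V_{xc}$) bound $\|F(\Lambda)\|_{H^1}$ by a constant independent of $\epsilon$; feeding $G=F(\Psi_\epsilon)$ with this bound into \eqref{eq:EN_1_eps}, \eqref{eq:EN_2_eps}, \eqref{eq:EN_4_eps} yields $\epsilon$-uniform estimates on each short subinterval, and the finite covering of $[0,T]$ (Step~3 of Theorem~\ref{thm:existence_uniqueness}) gives the global bounds by taking maxima. The advantage of this localization is that the superlinear Hartree term is tamed simply by restricting to a bounded set, with no need for energy conservation or variational structure.

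Your alternative has a gap you should be aware of. The nonlinear term $F$ contains $V_{xc}(\Psi)\Psi$, governed by (A5), which provides Lipschitz bounds only in $L^2$ and $H^2$; your citation of \eqref{eq:Lipschitz3_new} pertains to $W_{xc}$, not $V_{xc}$. Without an $H^1$-Lipschitz bound for $V_{xc}(\Psi)\Psi$ you cannot control $\|\nabla(V_{xc}(\Psi)\Psi)\|$ in terms of $\|\Psi\|_{H^1}$ alone, so the direct Gr\"onwall for $\|\nabla\Psi_\epsilon\|^2$ does not close under the stated hypotheses. Your Hartree absorption idea is valid in principle (and standard in NLS theory), but after substituting the equation the residual cross-terms with the mollified potentials still need to be shown quadratic in $\|\Psi_\epsilon\|_{H^1}$ using the already-established $L^2$ bound; this works for the Hartree piece but again stalls at $V_{xc}$.
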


\begin{proof}
The proof of existence and uniqueness of a solution $\Psi_\epsilon$ follows exactly the arguments
considered in Sections \ref{sec:auxiliary} and \ref{sec:full_NL}.
For the sake of clarity, we sketch here the main steps.
\begin{enumerate}
\item[{\rm (a)}] The energy estimates of Theorem \ref{thm:en_est_reg} allow us to show that there exists a
unique solution to the auxiliary problem obtained by replacing $F(\Psi)$ with $G \in \Y$
in \eqref{eq:TDKS_weak_epsilon}, that is,
\begin{equation}\label{eq:TDKS_weak_epsilon_AUX}
i (\partial_t \Psi,\Phi) = (\nabla \Psi, \nabla \Phi) + (V_{\epsilon}(\Psi),\Phi) + (G,\Phi) \text{ a.e. in $(0,T)$.}
\end{equation}

\item[{\rm (b)}] Given a final time $T^{\star \star}$, one defines an $H^2$-ball
$S_{B_{\circ}}(T^{\star\star})$ (analogously to \eqref{eq:27} and using \eqref{eq:EN_3_eps}),
which is centered in zero and has finite radius depending on $T^{\star\star}$,
and a map $\Phi \mapsto \mathcal{A}(\Phi):= \widetilde{\Psi}$, where $\widetilde{\Psi}$
solves the auxiliary problem \eqref{eq:TDKS_weak_epsilon_AUX} with $G$ replaced by $F(\Phi)$.
By reducing the final time $T^{\star \star}$ to a sufficiently small value, one proves
that $S_{B_{\circ}}(T^{\star\star})$ is invariant under $\mathcal{A}$ and that
$\mathcal{A} : S_{B_{\circ}}(T^{\star\star}) \rightarrow S_{B_{\circ}}(T^{\star\star})$
is a contraction (as in Lemma \ref{lemma:contraction} and Theorem \ref{thm:existence_uniqueness}).
The Banach fixed-point theorem implies
that there exists a unique solution in $S_{B_{\circ}}(T^{\star\star})$ to the
$\epsilon$-approximated problem \eqref{eq:TDKS_weak_epsilon}.

\item[{\rm (c)}] The obtained solution to \eqref{eq:TDKS_weak_epsilon} exactly satisfies the regularity
claimed in the statement of Theorem \ref{thm:ex_un_eps}. This is obtained (as in Theorem \ref{thm:existence_uniqueness}) by noticing that
the nonlinear function $F(\Psi)$ is uniformly bounded in $H^2$ for any $\Psi$ in $S_{B_{\circ}}(T^{\star\star})$.

\item[{\rm (d)}] The solution to \eqref{eq:TDKS_weak_epsilon} is extended on the time interval $[0,T]$
by repeating the previous three steps on a finite number of subintervals (of sufficiently small length
of order $T^{\star \star}$) that cover $[0,T]$.
%Notice that on each of these subintervals $F(\Psi)$ is uniformly bounded in $H^2$ for any $\Psi$ 
%belonging to a ball centered in zero and having radius depending on length of the corresponding subinterval.
\end{enumerate}

It is clear that the solution $\Psi_\epsilon$ satisfies on each subinterval considered in step {\rm (d)}
the energy estimates given in Theorem \ref{thm:en_est_reg}, upon replacement of $\Psi^a_m$ by $\Psi_\epsilon$
and $G$ by $F(\Psi_\epsilon)$, and upon redefinition of $\Psi_0$. However, these estimates  have bounds
that potentially depend on $\epsilon$, because they are obtained by estimating the $H^2$-norm of $F(\Psi_\epsilon)$.

To get $\epsilon$-independent bounds \eqref{eq:no_eps_bounds} we proceed as follows.
We define an $H^1$-ball $\widehat{S}_{B_{\circ}}(T^{\star\star})$ (analogously to \eqref{eq:27} and using \eqref{eq:EN_2_eps})
centered in zero and having finite radius depending
on $T^{\star \star}$. If necessary, we further reduce $T^{\star \star}$ to obtain 
a set $\widehat{S}_{B_{\circ}}(T^{\star\star})$ such that the intersection
$\widehat{S}_{B_{\circ}}(T^{\star\star}) \cap S_{B_{\circ}}(T^{\star\star})$ is invariant under $\mathcal{A}$.
Lemma \ref{lemma:Hartree} and \eqref{eq:Lipschitz3} guarantee that $\| F(\Lambda(t)) \|_{H^1}$ is uniformly
bounded on $\widehat{S}_{B_{\circ}}(T^{\star\star})$ by a constant independent of $\epsilon$.
Therefore, the terms $\| F(\Psi_\epsilon) \|_{Y_{\infty,0}}$ and $\| F(\Psi_\epsilon) \|_{Y_{\infty,1}}$
are bounded by some constants independent of $\epsilon$. 
Hence, the estimates \eqref{eq:no_eps_bounds} follow from Theorem \ref{thm:en_est_reg}.
In particular, Theorem \ref{thm:en_est_reg} allows us to get energy estimates (independent of $\epsilon$)
on each of subinterval of the finite covering constructed in {\rm (d)}. 
Since this covering of $[0,T]$ is finite the estimates \eqref{eq:no_eps_bounds} are obtained
by taking the maximum of the estimates over the finite number of covering subintervals. 
\end{proof}

Using Theorem \ref{thm:ex_un_eps} we prove existence of a unique solution to \eqref{eq:TDKS_weak_less_reg}.
\begin{theorem}[Existence and uniqueness of solution to the TDKS problem]\label{thm:thm_NEWNEW}
Under the assumptions {\rm (A1)}, {\rm (A2)}, {\rm (A3)}, {\rm (A2b)}, {\rm (A3b)}, {\rm (A4b)} and {\rm (A5b)}, 
there exists a unique weak solution
$\Psi \in W(0,T)$ to \eqref{eq:TDKS_weak_less_reg}.
\end{theorem}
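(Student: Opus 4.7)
The plan is to obtain the solution as the limit of the $\epsilon$-regularized solutions $\Psi_\epsilon$ from Theorem~\ref{thm:ex_un_eps}, exploiting that the bounds \eqref{eq:no_eps_bounds} are \emph{independent} of $\epsilon$. By the Banach--Alaoglu theorem (applied in the duals of separable spaces), one extracts a subsequence, still labelled $\Psi_\epsilon$, such that $\Psi_\epsilon \rightharpoonup \Psi$ weakly-$*$ in $L^\infty(0,T;H^1_0(\Omega;\C))$ and $\partial_t \Psi_\epsilon \rightharpoonup \partial_t \Psi$ weakly-$*$ in $L^\infty(0,T;H^{-1}(\Omega;\C))$. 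In particular $\Psi \in W(0,T)$ with $\Psi \in Y_{\infty,1}$ and $\partial_t \Psi \in X^*$. Since $H^1_0(\Omega;\C)$ embeds compactly in $L^2(\Omega;\C)$ and $L^2 \hookrightarrow H^{-1}$, the Aubin--Lions lemma yields strong convergence $\Psi_\epsilon \to \Psi$ in $Y$, and after a further extraction, pointwise a.e.\ in $\Omega\times(0,T)$.

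Next I pass to the limit in the weak equation \eqref{eq:TDKS_weak_epsilon}. Fix a test function $\Phi \in H^1_0(\Omega;\C)$. The linear terms $i(\partial_t \Psi_\epsilon,\Phi)$, $(\nabla \Psi_\epsilon,\nabla \Phi)$ and $(V\Psi_\epsilon,\Phi)$ converge to the corresponding terms in \eqref{eq:TDKS_weak_less_reg} by the weak convergences above. For the regularized potentials I write
\begin{equation*}
(W_\epsilon(\Psi_\epsilon) - W\Psi,\Phi) = (\phi_\epsilon \star \mathcal{E}(W(\Psi_\epsilon-\Psi)),\Phi) + (\phi_\epsilon \star \mathcal{E}(W\Psi) - W\Psi,\Phi),
\end{equation*}
the first term being controlled by $\|\phi_1\|_{L^1}\|W\|_\infty \|\Psi_\epsilon-\Psi\|\,\|\Phi\|$ which vanishes by strong convergence, and the second tending to zero by the standard approximation-by-convolution property. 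The term with $W_{xc,\epsilon}(\Psi_\epsilon)$ is treated identically using the Lipschitz bound \eqref{eq:Lipschitz2_new}. For the nonlinearity $F(\Psi_\epsilon)$, the Hartree part $V_H(\Psi_\epsilon)\Psi_\epsilon \to V_H(\Psi)\Psi$ in $Y$ by the Lipschitz estimate \eqref{eq:Lipschitz1} combined with the uniform $H^1$-bound on $\Psi_\epsilon$ and the strong $L^2$-convergence; the $V_{xc}$ part likewise uses \eqref{eq:Lipschitz2}. The initial condition passes to the limit via $\Psi_{0,\epsilon} \to \Psi_0$ in $H^1$ together with the continuity $W(0,T) \hookrightarrow C([0,T];L^2)$. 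Thus $\Psi$ is a weak solution to \eqref{eq:TDKS_weak_less_reg} with $\Psi(0)=\Psi_0$.

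For uniqueness, suppose $\Psi_1,\Psi_2 \in W(0,T)$ are two weak solutions and let $\delta\Psi := \Psi_1-\Psi_2$, which satisfies $\delta\Psi(0)=0$ and a linear equation with right-hand side containing $V(\Psi_1-\Psi_2) + W\delta\Psi + (W_{xc}(\Psi_1)\Psi_1-W_{xc}(\Psi_2)\Psi_2) + (F(\Psi_1)-F(\Psi_2))$. Testing with $\delta\Psi$, taking the imaginary part as in Theorem~\ref{thm:energy_aux}, and using \eqref{eq:Lipschitz1}, \eqref{eq:Lipschitz2}, \eqref{eq:Lipschitz2_new} together with the uniform $H^1$-bound on $\Psi_1$ and $\Psi_2$ (which comes from \eqref{eq:no_eps_bounds} inherited by the limit), one obtains
\begin{equation*}
\frac{d}{dt}\|\delta\Psi(t)\|^2 \leq C\,\|\delta\Psi(t)\|^2 \quad \text{a.e. in }(0,T),
\end{equation*}
for some constant $C$ depending on $\|V\|_\infty,\|W\|_\infty,K_1,C_a,K$ and the $H^1$-bounds of the solutions. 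Gr\"onwall's inequality gives $\delta\Psi\equiv 0$.

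The main obstacle I expect is the passage to the limit in the regularized potentials $W_\epsilon(\Psi_\epsilon)$ and $W_{xc,\epsilon}(\Psi_\epsilon)$, because both the mollifier and its argument are varying simultaneously; the trick is the splitting above, which separates the issue into (i) a Lipschitz continuity estimate in the argument, controlled by the strong $Y$-convergence of $\Psi_\epsilon$, and (ii) standard mollifier convergence on a \emph{fixed} $L^2$-function. A secondary subtlety is that the uniform bounds in \eqref{eq:no_eps_bounds} are only in $H^1$ and $H^{-1}$ (not in $H^2$ or $L^2$ for $\partial_t\Psi$), so the limiting solution only lies in $W(0,T)$ with the regularity stated, and the $H^2$-type estimates used in the proof of Theorem~\ref{thm:existence_uniqueness} are deliberately not inherited here.
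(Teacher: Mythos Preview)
Your proposal is correct and follows essentially the same route as the paper: extract a subsequence from the $\epsilon$-regularized solutions using the $\epsilon$-independent bounds \eqref{eq:no_eps_bounds}, obtain strong $Y$-convergence via compactness (Aubin--Lions), pass to the limit term by term, and prove uniqueness via an $L^2$-energy estimate with Gr\"onwall and the Lipschitz conditions \eqref{eq:Lipschitz1}, \eqref{eq:Lipschitz2}, \eqref{eq:Lipschitz2_new}. Your treatment is in fact more detailed than the paper's --- in particular, your splitting argument for the mollified terms $W_\epsilon(\Psi_\epsilon)$ and $W_{xc,\epsilon}(\Psi_\epsilon)$ makes explicit what the paper merely summarizes as ``the continuity of functions and operators on the right-hand side''.
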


\begin{proof}
Consider a sequence $\{\epsilon_k\}_k$ such that $\epsilon_k \rightarrow 0$.
Theorem \ref{thm:ex_un_eps} guarantees that for any $\epsilon_k$ there exists a unique solution $\Psi_{\epsilon_k}$
to the regularized problem \eqref{eq:TDKS_weak_epsilon_m}. This solution satisfies the bounds \eqref{eq:no_eps_bounds},
which are independent of $\epsilon$. Therefore, the sequence $\{ \Psi_{\epsilon_k} \}_k$ is bounded in $W(0,T)$.
This implies that there exists a subsequence $\{ \Psi_{\epsilon_{k_j}} \}_j$ that converges weakly in $W(0,T)$
and strongly in $Y$ to a limit $\widehat{\Psi}$. The continuity of functions and operators on the right-hand side of 
\eqref{eq:TDKS_weak_less_reg} allows us to show that $\widehat{\Psi}$ is a weak solution to \eqref{eq:TDKS_weak_less_reg}.
Uniqueness can be obtained by proceeding as in the proof of Theorem \ref{thm:ex_un_sol} and using \eqref{eq:Lipschitz2_new},
\eqref{eq:Lipschitz1} and \eqref{eq:Lipschitz2}.
\end{proof}

\section*{Acknowledgments}
The first author wishes to thank Prof. Joseph W. Jerome for many fruitful discussions, for his encouragement,
and for having read a first draft of this manuscript.
G. Ciaramella thanks also to Dr. Maria Infusino and Stefan Hain for having read a first draft of this manuscript
and provided several useful comments.

\bibliographystyle{plain}
\bibliography{references}

\end{document}